\documentclass[a4paper,10pt,twoside]{article}
\pdfoutput=1
\usepackage[T1]{fontenc} %WARNING make sure you have the package "cm-super" installed, or this line will produce ugly bitmap fonts. If you cannot install cm-super, probably it is better to comment this line
\usepackage[utf8]{inputenc}
\usepackage[british]{babel}
\usepackage{amsfonts}
\usepackage{amsmath} 
\usepackage{amsthm}
\usepackage{amssymb}
\usepackage{mathtools}
\usepackage{tikz}
\usetikzlibrary{matrix,arrows,decorations.pathmorphing, decorations.markings}

\usepackage[affil-it]{authblk}
\usepackage{hyperref}
\hypersetup{colorlinks=false, pdfborder={0 0 0}}
\usepackage{fancyhdr}                                   

\pagestyle{fancy}                                       
\fancyhf{}                                              
\fancyhead[LE,RO]{\thepage}                             
\fancyhead[LO]{\scshape\nouppercase{\leftmark}}        
\fancyhead[RE]{\scshape\nouppercase{\leftmark}}         
                      
\usepackage{url}
\usepackage{cite}
\usepackage{enumitem}
\usepackage{float}
\makeatletter                        
\g@addto@macro{\UrlBreaks}{\UrlOrds} 
\makeatother

\newcommand{\from}{\colon}
\renewcommand{\phi}{\varphi}
\renewcommand{\models}{\vDash}
\newcommand{\proves}{\vdash}
\newcommand{\restr}{\upharpoonright}
\newcommand{\implica}{\rightarrow}
\newcommand{\coimplica}{\leftrightarrow}
\newcommand{\bla}[4]{{#1}_{#2}#3\ldots#3{#1}_{#4}}
\DeclareMathOperator{\Th}{Th}
\DeclareMathOperator{\Con}{Con}
\DeclareMathOperator{\dom}{dom}

\DeclarePairedDelimiter{\set}{\{}{\}}

\theoremstyle{definition}
\newtheorem{defin}{Definition}[section]
\newtheorem{thm}[defin]{Theorem}
\newtheorem*{unnmdthm}{Theorem}
\newtheorem{pr}[defin]{Proposition}
\newtheorem{co}[defin]{Corollary}
\newtheorem{lemma}[defin]{Lemma}
\newtheorem*{notation}{Notation}
\newtheorem{rem}[defin]{Remark}
\newtheorem{fact}[defin]{Fact}

\newtheorem{assumption}[defin]{Assumption}
\newtheorem{eg}[defin]{Example}

\makeatletter
\newcommand{\subjclass}[2][2010]{%
  \let\@oldtitle\@title%
  \gdef\@title{\@oldtitle\footnotetext{\noindent #1 \emph{Mathematics subject classification.} #2}}%
}
\newcommand{\keywords}[1]{%
  \let\@@oldtitle\@title%
  \gdef\@title{\@@oldtitle\footnotetext{\noindent \emph{Key words and phrases.} #1.}}%
}
\makeatother

\setlength{\headheight}{15pt}   
\author[]{Bea Adam-Day%
  \thanks{email: \url{B.Adam-Day@leeds.ac.uk} \textsc{orcid}: \url{https://orcid.org/0000-0002-7891-916X}} }
\author[]{John Howe%
  \thanks{email: \url{J.A.Howe@leeds.ac.uk} \textsc{orcid}: \url{https://orcid.org/0000-0001-7580-6349}
} }
\author[]{Rosario Mennuni%
  \thanks{email: \url{R.Mennuni@leeds.ac.uk} \textsc{orcid}: \url{https://orcid.org/0000-0003-2282-680X}}}
\affil[]{University of Leeds}

\title{On Double-Membership Graphs \\of Models of Anti-Foundation}
\keywords{Anti-Foundation, double-membership graph,  Gaifman's Theorem, Hanf's Theorem, membership graph, non-well-founded sets, reducts of set theory}
\subjclass{Primary: 03C62. Secondary: 03C13, 03E30, 03E65.}
\begin{document}
\maketitle
\begin{abstract}\noindent We answer some questions about graphs which are reducts of countable models of Anti-Foundation, obtained by considering the binary relation of double-membership $x\in y\in x$.  We show that there are continuum-many such graphs, and study their connected components. We describe their complete theories and prove that each  has continuum-many countable models, some of which are not reducts of models of Anti-Foundation.
\end{abstract}

\noindent  This paper is concerned with the model-theoretic study of a class of graphs arising as reducts of a certain non-well-founded set theory.

Ultimately, models of a set theory are digraphs, where a directed edge between two points denotes membership. To such a model, one can associate various graphs, such as the \emph{membership graph}, obtained by symmetrising the binary relation $\in$, or the \emph{double-membership graph}, which has an edge between $x$ and $y$ when  $x\in y$ and $y\in x$ hold simultaneously. We also consider the structure equipped with the two previous graph relations, which we call the \emph{single-double-membership graph}.  In~\cite{adamdaycameron} the first author and Peter Cameron investigated this kind of object in the non-well-founded case. We continue this line of study, and answer some questions regarding such graphs which were  left open in the aforementioned work.

  It is well-known that every membership graph of a countable model of $\mathsf{ZFC}$ is isomorphic to the Random Graph (see e.g.~\cite{cameron}). The usual proof of this fact goes through for set theories much weaker than $\mathsf{ZFC}$, but uses the Axiom of Foundation in a crucial way, hence the interest in (double-)membership graphs of non-well-founded set theories. 

  Perhaps the most famous of these is $\mathsf{ZFA}$\footnote{Note that, in the literature, the name $\mathsf{ZFA}$ is also used for a certain variant of $\mathsf{ZFC}$ which admits urelements.}, obtained from $\mathsf{ZFC}$ by replacing the Axiom of  Foundation with the \emph{Anti-Foundation Axiom}. This axiom was explored in, amongst others,~\cite{fortihonsell, aczel, barwisemoss}. It provides a rich class of non-well-founded sets, the structure of which reflects that of the well-founded sets.
 In every model of Anti-Foundation there will be, for example, unique sets $a$ and $b$ such that $a=\set{b, \emptyset}$ and  $b=\set{a,\set{\emptyset}}$, and a unique $c=\set{c, \emptyset, \set{\emptyset}}$. These are pictured in Figure~\ref{figure:afaexample}.
\begin{figure}[b]
  \centering \caption{On the left, a picture of the unique sets $a$ and $b$ such that  $a=\set{b, \emptyset}$ and  $b=\set{a,\set{\emptyset}}$. On the right, a picture of the unique set $c$ such that $c=\set{c,\emptyset, \set{\emptyset}}$. The arrows denote membership.}\label{figure:afaexample}
\begin{tikzpicture}[scale=2]
\node(a) at (0,0){$a$};
\node(b) at (1,0){$b$};
\node(em) at (0,-0.75){$\emptyset$};
\node(one) at (1,-0.75){$\set\emptyset$};

\node(c) at (3,0){$c$};
\node(em1) at (3,-0.75){$\emptyset$};
\node(one1) at (4,-0.75){$\set\emptyset$};

\path[draw,decoration={
    markings,
    mark=at position 0.5 with {\arrow[black]{stealth};}}]
(a) edge[bend left, postaction=decorate]  (b)
(b) edge[bend left, postaction=decorate]  (a)
(em) edge[postaction=decorate] (a)
(one) edge[postaction=decorate] (b)
(em) edge[postaction=decorate] (one)
;
\path[draw,decoration={
    markings,
    mark=at position 0.5 with {\arrow[black]{stealth};}}]
(em1) edge[postaction=decorate] (c)
(one1) edge[postaction=decorate] (c)
(em1) edge[postaction=decorate] (one1)
;

\draw [decoration={
    markings,
    mark=at position 0.55 with {\arrow[black, rotate=-10]{stealth};}}, postaction=decorate] (c) ++ (-0.01, 0.08) arc [start angle=30, end angle=310, radius=0.13];
\end{tikzpicture}
\end{figure}
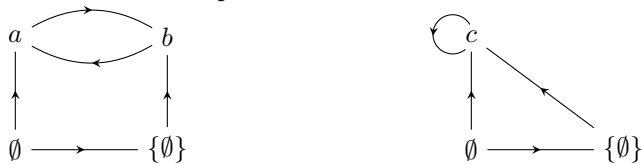

 In~\cite{adamdaycameron} it was proven that all membership graphs of countable models of $\mathsf{ZFA}$ are isomorphic to the `Random Loopy Graph': the Fra\"iss\'e limit of finite graphs with self-edges. This structure is easily seen to be $\aleph_0$-categorical, ultrahomogeneous, and supersimple of SU-rank $1$. On the other hand, double-membership graphs of models of $\mathsf{ZFA}$ are, in a number of senses, much more complicated. For instance,  \cite[Theorem~3]{adamdaycameron}  shows that they are not $\aleph_0$-categorical, and we show further results in this direction.

The structure of the paper is as follows. After setting up the context in Section~\ref{sec:setup}, we answer~\cite[Question~3]{adamdaycameron} in Section~\ref{sec:connectedcomponents} by characterising the connected components of double-membership graphs of models of $\mathsf{ZFA}$. In the same section, we show that if we do not assume Anti-Foundation, but merely drop Foundation, then double-membership graphs can be almost arbitrary. Section~\ref{sec:cmany} answers~\cite[Questions~1 and~2]{adamdaycameron} by proving the following theorem.
\begin{unnmdthm}[Corollary~\ref{co:manymodels}]
There are, up to isomorphism,   continuum-many countable (single-)double-membership graphs of models of $\mathsf{ZFA}$, and continuum-many countable models of each of their theories. 
\end{unnmdthm}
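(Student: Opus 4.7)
My plan is to treat the two assertions in turn. For the first—producing $2^{\aleph_0}$ pairwise non-isomorphic countable double-membership graphs of models of $\mathsf{ZFA}$—I exploit the component classification established in Section~\ref{sec:connectedcomponents}. I expect that infinitely many isomorphism types of finite connected components are realisable in such graphs, for instance via short double-membership cycles of various lengths or small double-membership patterns of varying sizes, each realisable by AFA. For every subset $S$ of the resulting countable family of finite component isomorphism types, I would build a countable model $M_S$ of $\mathsf{ZFA}$ whose double-membership graph contains exactly the components indexed by $S$ (together with whatever ``background'' is forced by the remaining ZFA axioms): one assembles finitely many prescribed accessible pointed graphs into a countable accessible pointed graph and then invokes AFA to obtain a corresponding universe of sets. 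Since the multiset of isomorphism types of finite connected components is an isomorphism invariant of the graph, distinct choices of $S$ yield non-isomorphic graphs, giving $2^{\aleph_0}$ of them. The single-double-membership case is handled by the same components-based argument, since refining by the membership graph does not destroy the invariant.

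For the second assertion, that each of these theories has $2^{\aleph_0}$ countable models, my plan is to invoke Hanf's locality theorem (flagged in the keywords): the complete first-order theory of a disjoint union of finite graphs is determined by the function $F \mapsto \min(n_F, N_F)$, where $n_F$ counts the copies of the finite connected-component type $F$ in the graph, and $N_F$ is a suitable Hanf threshold depending on $F$. Hence, once a theory $T$ of one of our graphs is fixed, for every $F$ whose count is pinned down by $T$ only as ``at least $N_F$'', the actual value $n_F\in\{N_F,N_F+1,\ldots,\aleph_0\}$ is a free invariant of the countable model. The construction outlined above makes infinitely many such free parameters available in $T$, and varying them independently yields $2^{\aleph_0}$ pairwise non-isomorphic countable models of $T$. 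Most of these will not be reducts of models of $\mathsf{ZFA}$, in line with the remark at the end of the abstract.

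The principal obstacles will be (i) verifying concretely, via AFA, that infinitely many pairwise non-isomorphic finite connected components do arise in double-membership graphs of $\mathsf{ZFA}$ models, which should be natural in view of Section~\ref{sec:connectedcomponents}; and (ii) justifying the applicability of Hanf's theorem to the theories in question, and in particular reducing the analysis to the ``finite-components part'' of the graph, since any infinite connected components lie outside the scope of classical local-type arguments and need to be treated separately. A subsidiary point is to ensure that the models $M_S$ of $\mathsf{ZFA}$ can be taken countable while still realising arbitrary subsets $S$, which should follow from a Löwenheim--Skolem-type argument combined with the AFA construction.
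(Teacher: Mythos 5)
There is a genuine gap in both halves of your plan, and it stems from a single fact. By \cite[Theorem~4]{adamdaycameron} (generalised here as Theorem~\ref{thm:aq3}), \emph{every} finite connected graph occurs, infinitely many times, as a connected component of the D-graph of \emph{every} model of $\mathsf{ZFA}$: finite graphs are absolute, hence are graphs in the sense of $M$, and $\mathsf{AFA}$ realises each of them (and arbitrary disjoint unions of copies) as unions of components of $M_1$. Consequently you cannot build a model $M_S$ whose D-graph contains \emph{exactly} the finite components indexed by $S$ --- the ``background forced by the remaining axioms'' already contains everything --- and the invariant you propose, the (multi)set of isomorphism types of finite connected components, takes the same value ($\aleph_0$ copies of every finite connected graph) on all countable D-graphs of models of $\mathsf{ZFA}$, so it distinguishes nothing. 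The same fact defeats the Hanf-style argument for the second assertion: for each finite connected $F$ and each $k$, the sentence ``there are at least $k$ components isomorphic to $F$'' belongs to $\Th(M_1)$, so in every countable model of that theory the count $n_F$ equals $\aleph_0$; there are no free finite-component multiplicities to vary. Whatever distinguishes continuum many models must live in the infinite components, which your proposal explicitly sets aside as ``to be treated separately'' without saying how.

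The paper's proof instead exhibits continuum many pairwise contradictory \emph{partial types} in the language $L_1$, each consistent with $\Th(M_1)$ (indeed with $\Th(M)$): for $A\subseteq\omega\setminus\{0\}$, the type $\beta_A(y)$ of an ``$A$-bouquet'', a loopless point having a $D$-edge to some $n$-flower (a loopless point of $D$-degree $n$) precisely for the $n\in A$. Consistency follows by compactness from Proposition~\ref{pr:embedgraphs}, since every finite approximation is realised in a suitable finite component; mutual contradiction is immediate. A countable structure can realise only countably many pairwise contradictory partial types, so $\Th(M)$ has $2^{\aleph_0}$ countable models realising distinct subsets of $\{\beta_A\mid A\subseteq\omega\setminus\{0\}\}$, and because the $\beta_A$ are $L_1$-types the D-graph (and SD-graph) reducts remain pairwise non-isomorphic. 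This settles both assertions simultaneously: the reducts are genuine D-graphs of countable models of $\mathsf{ZFA}$, all elementarily equivalent to $M_1$. If you wish to keep a component-flavoured intuition, note that an $A$-bouquet for infinite $A$ necessarily sits inside an infinite connected component --- exactly the part of the graph your locality argument cannot see.
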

\noindent In Section~\ref{sec:theory} we study the common theory of double-membership graphs, which we show to be incomplete.  Then, by using methods more commonly encountered in finite model theory, we characterise the completions of said theory in terms of consistent collections of consistency statements.
\begin{unnmdthm}[Theorem~\ref{thm:completions}]
  The double-membership graphs of two models $M$ and $N$ of $\mathsf{ZFA}$ are elementarily equivalent precisely when $M$ and $N$ satisfy the same consistency statements.
\end{unnmdthm}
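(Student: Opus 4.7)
The plan is to invoke Hanf's locality theorem, one of the two tools highlighted in the keywords. The double-membership graph is a structure in a relational language with a single binary symbol, so Hanf applies: for each $k$ there exist $d$ and $m$ (depending on $k$ and the signature) such that two such graphs are $\equiv_k$ whenever, for every isomorphism type $\tau$ of rooted $d$-neighbourhood, their numbers of $\tau$-realisations either coincide or both exceed $m$. The strategy is to reduce both ``same consistency statements'' and ``elementary equivalence'' to the same invariant, namely the set of local neighbourhood types realised in the graph.

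The forward implication is essentially immediate: each consistency statement can be written as a first-order sentence in the double-membership language, so elementarily equivalent double-membership graphs automatically agree on all such statements.

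For the converse, the crucial set-theoretic input is that, in a model of $\mathsf{ZFA}$, every local neighbourhood type that is realised in the double-membership graph is in fact realised by infinitely many vertices. Given a finite system witnessing a local type around some vertex $x$, one invokes the existence clause of Anti-Foundation to construct arbitrarily many pairwise distinct solutions of augmented systems, each reproducing the same $d$-neighbourhood type around its distinguished vertex, with the additional material chosen to contribute no spurious double-membership edges to the relevant neighbourhood. It then follows that in each of the two graphs every local type is realised either zero times or more than $m$ times. Combined with the hypothesis that $M$ and $N$ satisfy the same consistency statements---which, under the correspondence between consistency statements and realisability of local types, forces the two double-membership graphs to realise exactly the same local types---the Hanf count condition holds for every $k$, whence the two graphs are elementarily equivalent.

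The main obstacle is the infinitude step. The uniqueness half of Anti-Foundation means that one cannot simply ``copy'' a given configuration: new realisations of a local type must come from genuinely different systems whose unique solutions happen to look locally identical around their respective roots. The padding must therefore be chosen so that augmenting a given witnessing system yields a new, non-isomorphic but still consistent system whose solution agrees with the original on the relevant $d$-neighbourhood; verifying this is where the bulk of the set-theoretic work lies.
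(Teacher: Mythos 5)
Your forward direction is fine (it is exactly the paper's Corollary~\ref{co:muphiprime}: $M\models\Con(\theta)\iff M_1\models\mu(\theta')$), but the converse has two genuine gaps. First, Hanf's theorem in the form you invoke (agree on the number of realisations of each $d$-neighbourhood isomorphism type, or both exceed a threshold $m$) is only valid under the hypothesis that all $d$-balls are finite of bounded size; see \cite[Theorem~2.4.1]{ebbinghausflum}. In a D-graph of a model of $\mathsf{ZFA}$ this fails badly: by Proposition~\ref{pr:embedgraphs} every graph in $M$ embeds as a union of regions, so there are vertices of infinite degree and infinite balls everywhere. The locality theorem that survives infinite degree is Gaifman's, not Hanf's, and the paper explicitly notes that Gaifman would yield a (weaker) version of the result; its actual proof is a direct Ehrenfeucht--Fra\"iss\'e argument.

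Second, and more fundamentally, your reduction ``same consistency statements $\Rightarrow$ same set of realised local neighbourhood types'' is false when ``type'' means isomorphism type, which is what the Hanf count condition requires. Consistency statements only control which \emph{first-order sentences} are satisfied by some graph in $M$; they cannot distinguish elementarily equivalent but non-isomorphic neighbourhoods (e.g.\ a root with $\aleph_0$ versus $\aleph_1$ neighbours, or any pair of countable elementarily equivalent non-isomorphic rooted graphs). The paper's proof works precisely because it only ever matches the region of a played point up to $\equiv_{n-m+1}$, each such class being characterised by a single sentence $\phi\in\Phi$ whose realisability transfers between $M_1$ and $N_1$ via $\mu(\phi)$ and Lemma~\ref{lemma:muworks}; freshness of the new copy is then obtained by rewriting a flat system to force disjointness from the finitely many regions already used, and the pieces are glued with the disjoint-union lemma for $\equiv_k$ (Lemma~\ref{lemma:duog}). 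Your ``infinitude'' step is a reasonable surrogate for that disjointness argument, but it does not repair the mismatch between isomorphism types and first-order classes, nor the inapplicability of Hanf's theorem itself.
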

\noindent We also show that all of these completions are wild in the sense of neostability theory, since each of their models interprets (with parameters) arbitrarily large finite fragments of $\mathsf{ZFC}$. Our final result, below~---~obtained with similar techniques~---~answers~\cite[Question~5]{adamdaycameron} negatively. The analogous statement for double-membership graphs holds as well.
\begin{unnmdthm}[Corollary~\ref{co:q5}]
  For  every single-double-membership graph of a model of $\mathsf{ZFA}$, there is a countable elementarily equivalent structure which is not the single-double-membership graph of any model of $\mathsf{ZFA}$.
\end{unnmdthm}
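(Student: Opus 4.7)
The plan is to exploit the interpretation of arbitrarily large finite fragments of $\mathsf{ZFC}$ inside models of $\mathrm{Th}(G)$ (with parameters), established earlier in Section~\ref{sec:theory}, via a compactness argument. Given $G$, the single-double-membership graph of some $M\models\mathsf{ZFA}$, the goal is to construct a countable $G'\equiv G$ containing a ``non-standard arithmetic witness'' that cannot exist inside any genuine single-double-membership graph of an $\mathsf{ZFA}$-model.

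Set $T:=\mathrm{Th}(G)$. For each $n\in\omega$, I would fix a tuple $\bar p_n\in G$ and a formula $\iota_n$ in the graph language such that $\iota_n(-,\bar p_n)$ interprets in $G$ a model of the first $n$ axioms of $\mathsf{ZFC}$; in particular $\iota_n$ defines an interpreted arithmetic with numerals $\bar 0,\bar 1,\ldots$ and an ordering $>$. I would then expand the language by new constants $\bar q_n$ (one tuple for each $n$) and a single constant $c$, and consider the theory $T^+$ consisting of (i) all of $T$; (ii) the scheme $\{\phi(\bar q_n) : G\models\phi(\bar p_n)\}$ for each $n$; (iii) the scheme $\{\iota_n^{\mathbb N}(c,\bar q_n)\wedge c>\bar k : n,k\in\omega\}$, where $\iota_n^{\mathbb N}$ and $>$ refer to the arithmetic interpreted by $\iota_n$. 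Each finite fragment of $T^+$ is satisfied in $G$ by setting $\bar q_n:=\bar p_n$ and taking $c$ to be a sufficiently large numeral in one of the interpreted arithmetics. By compactness and L\"owenheim--Skolem, $T^+$ admits a countable model, and its graph-reduct $G'$ satisfies $G'\equiv G$.

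The remaining step is to argue that $G'$ is not the single-double-membership graph of any $N\models\mathsf{ZFA}$. If it were, the interpretations $\iota_n(-,\bar q_n')$ would yield definable substructures of $N$ satisfying progressively larger fragments of $\mathsf{ZFC}$. One then expects that, inside $N$, these interpretations coherently identify themselves with $\omega^N$ (for instance, via a canonical parameter-free interpretation of arithmetic absolute in $\mathsf{ZFA}$), forcing $c'$ to correspond to an actual element of $\omega^N$. Since $N$ proves every natural is equal to some numeral, this would contradict the scheme $c'>\bar k$ for all $k$.

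The hard part will be precisely this last step: showing that, inside a genuine graph, the family of interpreted arithmetics must coherently match $\omega^N$ rather than being arbitrary parameter-based models of $\mathsf{ZFC}$-fragments. This likely requires selecting the interpretations $\iota_n$ from Section~\ref{sec:theory} to be sufficiently canonical (so that all sufficiently rich interpretations of arithmetic in $N$ are definably isomorphic to $\omega^N$), or replacing the compactness argument by one that directly encodes the required impossibility from graph-theoretic data, for instance by realising a specific non-isolated type whose realisations provably witness non-standardness inside any genuine graph.
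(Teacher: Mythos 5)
Your compactness construction of $G'$ is fine as far as it goes, but the final step is not merely ``hard'': it cannot work, because the configuration you produce does not certify that a structure fails to be an SD-graph. First, a model $N\models\mathsf{ZFA}$ may itself be non-standard, so $\omega^N$ can contain elements above every metatheoretic numeral; the internal theorem ``every natural number equals some numeral'' quantifies over \emph{internal} numerals and yields no contradiction with the scheme $c'>\bar k$ for standard $k$. Second, the interpretations supplied by Corollary~\ref{co:untame} come from $\mu(\theta')$: they live on the neighbourhood of a point $a$ and are arbitrary internal models of the finite fragment $\theta$, with no canonical definable isomorphism to $\omega^N$ --- $\mathsf{ZFA}$ proves only $\Con(\theta)$, so there is no ``sufficiently canonical'' choice of $\iota_n$ to be had. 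Most decisively, your conditions (ii) and (iii) form a partial $L_0$-type over $\emptyset$ in the variables $\bar q_n, c$ which is consistent with $\Th(M_0)$, hence with $\Th(M)$; realising it in a countable model of $\Th(M)$ (Fact~\ref{fact:manytypes}) and passing to the reduct produces a genuine SD-graph of a model of $\mathsf{ZFA}$ exhibiting exactly your ``non-standard arithmetic witness''. So the intended obstruction is realised inside the class you are trying to exclude, and no first-order strengthening of the type can repair this.

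What is needed is an invariant that every SD-graph of a model of $\mathsf{ZFA}$ provably possesses but which is \emph{not} preserved under elementary equivalence among countable structures. The paper uses: the $L_1$-reduct of any model of $\mathsf{ZFA}$ has a connected component of infinite diameter (embed an infinite path via Proposition~\ref{pr:embedgraphs}), yet Theorem~\ref{thm:noinfdiam} constructs, by a Hanf-locality/Ehrenfeucht--Fra\"iss\'e argument on Gaifman balls with respect to $D$, a countable $N\equiv M_0$ all of whose $D$-components have finite diameter. Such an $N$ cannot be an SD-graph, which gives Corollary~\ref{co:q5}. Any successful proof must target a non-elementary property of this kind rather than non-standardness of interpreted arithmetic.
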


\section{Set-Up}\label{sec:setup}

Since Anti-Foundation allows for sets that are members of themselves, in what follows we will need to deal with graphs where there might be an edge between a point and itself. These are called \emph{loopy graphs} in~\cite{adamdaycameron} but,  for the sake of concision, we depart from common usage by adopting the following convention.
\begin{notation}
By \emph{graph} we mean a first-order structure with a single relation which is binary and symmetric (it is not required to be irreflexive).
\end{notation}

There are a number of equivalent formulations of Anti-Foundation. The form which we shall be using is known in the literature (e.g.~\cite[p.~71]{barwisemoss}) as the \emph{Solution Lemma}.    Other formulations are in terms of \emph{homomorphism onto  transitive structures} (axiom $X_1$ from~\cite{fortihonsell}), or \emph{decorations}.   For the equivalence between them, see e.g.~\cite[p.~16]{aczel}.

\begin{defin}\label{defin:flatsystem} Let $X$ be a set of `indeterminates', and $A$ a set of sets. A \emph{flat system of equations} is a set of equations of the form $x = S_x$, where  $S_x$ is a subset of $X \cup A$ for each $x \in X$.   A \emph{solution} $f$ to the flat system is a function taking elements of $X$ to sets, such that after replacing  each $x\in X$ with $f(x)$ inside the system, all of its equations become true.

  The \emph{Anti-Foundation Axiom} ($\mathsf{AFA}$) is the statement that every flat system of equations has a unique solution.
\end{defin}
\begin{eg}
Consider the flat system with $X=\set{x,y}$,  $A=\set{\emptyset,\set{\emptyset}}$ and the following equations.
\[
  \begin{split}	x&=\{y,\emptyset\}\\
	y&=\{x,\set\emptyset\}
      \end{split}
\]
The image of its unique solution $x\mapsto a, y\mapsto b$  is pictured in Figure~\ref{figure:afaexample}.
\end{eg}

Note that solutions of systems need not be injective, and in fact uniqueness sometimes prevents injectivity. For instance, if $x\mapsto a$ is the solution of the flat system consisting of the single equation $x=\set{x}$, then $x\mapsto a, y\mapsto a$ solves the system with equations $x=\set y$ and $y=\set x$, whose unique solution is therefore not injective.

\begin{rem}
There exists a weak form of $\mathsf{AFA}$ that only postulates the existence of solutions to flat systems, but not necessarily their uniqueness, known as axiom $X$ in~\cite{fortihonsell} or $\mathsf{AFA}_1$ in~\cite{aczel}. In what follows and in~\cite{adamdaycameron}  uniqueness is  never used, hence all the results go through for models of $\mathsf{ZFC}$ with Foundation replaced by $\mathsf{AFA}_1$. For brevity, we still state everything for $\mathsf{ZFA}$.
\end{rem}

    \begin{fact}\label{fact:equicon}
$\mathsf{ZFC}$ without the Axiom of Foundation proves the equiconsistency of $\mathsf{ZFC}$ and $\mathsf{ZFA}$.
\end{fact}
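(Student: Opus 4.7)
The plan is to prove the two directions of the equiconsistency separately, working in the base theory $\mathsf{ZFC}$ minus Foundation.

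For $\Con(\mathsf{ZFA}) \Rightarrow \Con(\mathsf{ZFC})$, I would start from a model $M$ of $\mathsf{ZFA}$ and consider its \emph{well-founded part}, namely the class of those $x \in M$ whose transitive closure (computed in $M$) carries no infinite descending $\in$-chain. Every axiom of $\mathsf{ZFA}$ apart from $\mathsf{AFA}$ already belongs to $\mathsf{ZFC}$ minus Foundation, and the corresponding set-forming operations (pairing, union, power set, separation, replacement) preserve well-foundedness. Hence the well-founded part is a transitive subclass closed under these operations, and it satisfies Foundation by construction; so it is an inner model of $\mathsf{ZFC}$.

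For $\Con(\mathsf{ZFC}) \Rightarrow \Con(\mathsf{ZFA})$, I would appeal to Aczel's construction from~\cite{aczel}. Given a model of $\mathsf{ZFC}$, one considers the class of \emph{accessible pointed graphs}, quotients by the maximal bisimulation (using Scott's trick to reduce each bisimilarity class to a set), and equips this quotient with the membership relation induced by adjacency to the distinguished root. A careful verification shows that each axiom of $\mathsf{ZFC}$ lifts to the quotient, while maximality of bisimulation ensures that every flat system of equations has a unique solution, giving $\mathsf{AFA}$.

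The main obstacle is the second direction: the bisimulation quotient requires delicate set-theoretic bookkeeping, and each axiom~---~particularly Replacement and Power Set~---~has to be verified over the new membership relation. Since the details take up a substantial portion of~\cite{aczel} and~\cite{barwisemoss}, I would not reproduce them, but instead cite those references, noting only that both constructions go through over $\mathsf{ZFC}$ minus Foundation as base theory.
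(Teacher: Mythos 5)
Your proposal matches the paper's proof essentially exactly: the paper likewise obtains $\Con(\mathsf{ZFC})$ from $\Con(\mathsf{ZFA})$ by restricting to the well-founded sets, and for the converse simply cites Forti--Honsell (Theorem~4.2) and Aczel (Chapter~3), which is the bisimulation-quotient construction you sketch. The extra detail you supply (transitivity and closure of the well-founded part, the accessible-pointed-graph quotient) is accurate and consistent with those references, so there is nothing to correct.
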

\begin{proof}
In one direction, from a model of $\mathsf{ZFA}$ one obtains one of $\mathsf{ZFC}$ by restricting to the well-founded sets. In the other direction, see~\cite[Theorem~4.2]{fortihonsell} for a class theory version, or~\cite[Chapter~3]{aczel} for the $\mathsf{ZFC}$ statement.
\end{proof}

Since we are interested in studying (reducts of) models of $\mathsf{ZFA}$, we need to assume they exist in the first place, since otherwise the answers to the questions we are studying are trivial. Therefore, in this paper we work in a set theory which is slightly stronger than usual.
\begin{assumption}
The ambient metatheory is $\mathsf{ZFC}+\Con(\mathsf{ZFC})$.
\end{assumption}

\begin{defin}\label{defin:symmetrisation}
  Let $L=\set{\in}$, where $\in$ is a binary relation symbol, and $M$ an $L$-structure. Let $S$ and $D$ be the definable relations
  \begin{gather*}
    S(x,y)\coloneqq x\in y\lor y\in x\\
    D (x,y)\coloneqq x\in y\land y\in x
\end{gather*}
The \emph{single-double-membership graph}, or \emph{SD-graph}, $M_0$ of $M$ is the reduct of $M$ to $L_0\coloneqq\set{S,D}$.  The \emph{double-membership graph}, or \emph{D-graph}, $M_1$ of $M$ is the reduct of $M$ to $L_1\coloneqq\set{D}$. 
\end{defin}
So, given an $L$-structure $M$, i.e.\ a digraph (possibly with loops) where the edge relation is $\in$, we have that $M_0\models S(x,y)$ if and only if in $M$ there is at least one $\in$-edge between $x$ and $y$. Similarly  $M_0\models D(x,y)$ means that in $M$ we have both $\in$-edges between $x$ and $y$. The idea is that, if $M$ is a model of some set theory, then $M_0$ is a symmetrisation of $M$ which keeps track of double-membership as well as single-membership, and $M_1$ only keeps track of double-membership.

In~\cite{adamdaycameron},  $M_0$ is called the \emph{membership graph (keeping double-edges)} of $M$ and $M_1$ is called the \emph{double-edge graph} of $M$.  Note that, strictly speaking, SD-graphs are not graphs, according to our terminology.

For the majority of the paper we are concerned with D-graphs, since most of the results we obtain for them imply the analogous versions for SD-graphs. This situation will reverse in Theorem~\ref{thm:noinfdiam}.

\begin{defin}
  Let $M\models \mathsf{ZFA}$. We say that $A\subseteq M$ is an \emph{$M$-set} iff there is $a\in M$ such that $A=\set{b\in M\mid M\models b\in a}$.
\end{defin}

So an $M$-set $A$ is a definable subset of $M$ which is the extension of a set in the sense of $M$, namely the $a\in M$ in the definition. We will occasionally abuse notation and refer to an $M$-set $A$ when we actually mean the corresponding $a\in M$.

\section{Connected Components}\label{sec:connectedcomponents}

Let $M\models \mathsf{ZFA}$. It was proven in~\cite[Theorem~4]{adamdaycameron} that, for every finite connected graph $G$, the D-graph $M_1$ has infinitely many connected components isomorphic to $G$. It was asked in~\cite[Question~3]{adamdaycameron} if more can be said about the infinite connected components of $M_1$. In this section we characterise them in terms of the graphs inside $M$.

Let $G$ be a graph in the sense of $M\models \mathsf{ZFA}$, i.e.\ a graph whose domain and edge relation are $M$-sets, the latter as, say, a set of Kuratowski pairs. If $G$ is such a graph and $M\models \text{`$G$ is connected'}$, then $G$ need not necessarily be connected. This is due to the fact that $M$ may have non-standard natural numbers, hence relations may have non-standard transitive closures.  We therefore introduce the following notion. 

\begin{defin}\label{defin:region}
  Let $a\in M\models \mathsf{ZFA}$. Let $b\in M$ be such that
  \[
    M\models\text{`$b$ is the transitive closure of $\set{a}$ under $D$'}
  \]
      The \emph{region of $a$ in $M$} is $\set{c\in M\mid M\models c\in b}$.  If $A\subseteq M$, we say that $A$ is a \emph{region of $M$} iff it is the region of some $a\in M$.
    \end{defin}

\begin{rem}\label{rem:compdef}
  For each $a\in M$, the region of $a$ in $M$ is an $M$-set.
\end{rem}

For $a\in M$, if $A$ is  the region of $a$ and  $B$ is the transitive closure of $\set{a}$ under $D$ computed in the metatheory, i.e.\ the connected component of $a$ in $M_1$, then $B\subseteq A$. In particular, regions of $M$ are unions of connected components of $M_1$. If $M$ contains non-standard natural numbers and the diameter of $B$ is infinite then the inclusion $B\subseteq A$ may be strict, and $B$ may not even be an $M$-set.  From now on, the words `connected component' will only be used in the sense of the metatheory.

Most of the appeals to $\mathsf{AFA}$ in the rest of the paper will be applications of the following proposition. In fact, after proving it, we will only deal directly with flat systems twice more.

\begin{pr}\label{pr:embedgraphs}
Let $M_1$ be the D-graph of $M\models \mathsf{ZFA}$, and let $G$ be a graph in $M$.   Then there is $H\subseteq M_1$ such that
  \begin{enumerate}
  \item $(H, D^{M_1}\restr H)$ is isomorphic to $G$,
  \item $H$ is a union of regions of $M$, and
  \item $H$ is an $M$-set.
  \end{enumerate}
\end{pr}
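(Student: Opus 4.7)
The plan is to realise $G$ inside $M_1$ by applying the Solution Lemma \emph{inside} $M$ to a carefully chosen flat system.  Working in $M$, let $V$ and $E$ be the vertex set and (symmetric) edge relation of $G$; using choice inside $M$, fix an injection $\phi\from V\to\mathrm{Ord}$ whose range avoids $0$, and consider the flat system with indeterminates $\set{x_v : v\in V}$ and equations
\[
  x_v = \set{x_w:(v,w)\in E}\cup\set{\emptyset,\set{\phi(v)}}.
\]
By $\mathsf{AFA}$ inside $M$ this has a unique solution $f$; setting $h_v\coloneqq f(x_v)$ and $H\coloneqq\set{h_v:v\in V}$, condition $(3)$ is immediate because $H$ is the range of a function that is a set in $M$.

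The rest of the verification rests on the observation that each $h_v$ contains the two distinct well-founded elements $\emptyset$ and $\set{\phi(v)}$, so $\lvert h_v\rvert\ge 2$, and hence $\set{\phi(v)}\neq h_u$ for every $u$.  If $h_v=h_{v'}$, then $\set{\phi(v)}\in h_{v'}$, and the only case compatible with the elements of $h_{v'}$ is $\set{\phi(v)}=\set{\phi(v')}$, forcing $v=v'$ by injectivity of $\phi$.  The same case analysis applied to $h_{v'}\in h_v$ shows that $D(h_v,h_{v'})$ holds precisely when $(v,v')\in E$, yielding condition $(1)$.

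For condition $(2)$ I would show that every $D$-neighbour of any $h_v$ already lies in $H$.  Such a neighbour is a $z\in h_v$ with $h_v\in z$: the tag $z=\emptyset$ fails at once, while $z=\set{\phi(v)}$ would force $h_v=\phi(v)$, which is ruled out since $\set{\phi(v)}\in h_v$ and, because $\phi(v)\neq 0$, the singleton $\set{\phi(v)}$ is not an ordinal, so $h_v$ cannot be one either.  The remaining $D$-neighbours of $h_v$ are the $h_w$ with $v\sim_G w$, all in $H$, so the region of each $h_v$ (computed inside $M$ by $D$-closure) is contained in $H$, making $H$ a union of regions.

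The delicate point is the design of the tags.  The example following Definition~\ref{defin:flatsystem} already shows that the bare system $x_v=\set{x_w:v\sim_G w}$ may collapse, so the tags must both separate the $h_v$ from each other and avoid being confused with any $h_u$ or forming new double-edges with $h_v$; tagging with both $\emptyset$ and the labelled singleton $\set{\phi(v)}$ enforces $\lvert h_v\rvert\ge 2$, and this one cardinality bound suffices for all of the obstructions above.
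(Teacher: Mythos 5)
Your proposal is correct and follows essentially the same route as the paper: apply $\mathsf{AFA}$ inside $M$ to a flat system whose equations list the neighbours of each vertex together with distinguishing well-founded tags, then check injectivity, the edge correspondence, and closure under $D$-neighbours. The only difference is cosmetic: the paper identifies $\dom G$ with its cardinal $\kappa$ and uses the single tag $i$ in the equation for $x_i$, whereas you use the pair $\emptyset,\set{\phi(v)}$ to get the cardinality bound $\lvert h_v\rvert\ge 2$; both devices serve the same purpose.
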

\begin{proof}
Work in $M$ until further notice.  Let $G$ be a graph in $M$, say in the language $\set{R}$. Let $\kappa$ be its cardinality, and assume up to a suitable isomorphism that $\dom G=\kappa$. In particular, note that every element of $\dom G$ is a well-founded set. Consider the flat system
  \[
\set{x_i=\set{i, x_j\mid j\in\kappa, G\models R(i,j)}\mid i\in \kappa}
\]
Let $s\from x_i\mapsto a_i$ be a solution to the system. If $i\ne j$, then $i\in a_i\setminus a_j$, and therefore $s$ is injective. Observe that
\begin{enumerate}[label=(\roman*)]
\item \label{point:iso} since $R$ is symmetric, we have $a_i\in a_j\in a_i\iff G\models R(i,j)$, and
\item \label{point:ucc} for all $b\in M$ and all $i\in \kappa$, we have $b\in a_i\in b$ if and only if there is $j<\kappa$ such that $b=a_j$ and $G\models R(i,j)$.
\end{enumerate}

Now work in the ambient metatheory.  Consider the $M$-set
\[
  H\coloneqq\set{a_i\mid M\models i\in \kappa}=\set{b\in M\mid M\models b\in \operatorname{Im}(s)}\subseteq M_1
\]
By~\ref{point:iso} above, $(H, D^{M_1}\restr H)$ is isomorphic to $G$ and, by~\ref{point:ucc} above, $H$ is a union of regions of $M$. 
\end{proof}

We can now generalise~\cite[Theorem~4]{adamdaycameron}, answering~\cite[Question~3]{adamdaycameron}. The words `up to isomorphism' are to be interpreted in the sense of the metatheory, i.e.\ the isomorphism need not be in $M$.
\begin{thm}\label{thm:aq3}
Let $M\models \mathsf{ZFA}$.  Up to isomorphism, the connected components  of $M_1$ are exactly the connected components  (in the sense of the metatheory) of graphs in the sense of $M$. In particular, there are infinitely many copies of each of them.
\end{thm}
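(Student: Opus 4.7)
The plan is to prove the two inclusions separately and then the multiplicity clause.

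For the inclusion of metatheoretic components of graphs in $M$ into components of $M_1$: given a graph $G$ in the sense of $M$, I apply Proposition~\ref{pr:embedgraphs} to obtain an $M$-set $H\subseteq M_1$ with $(H, D^{M_1}\restr H)\cong G$ and with $H$ a union of regions of $M$. By the discussion following Definition~\ref{defin:region}, every region is a union of metatheoretic connected components of $M_1$; thus so is $H$. This forces the metatheoretic connected components of $(H, D^{M_1}\restr H)$ to coincide with those metatheoretic connected components of $M_1$ that happen to lie inside $H$, and transporting along the isomorphism realises every metatheoretic component of $G$ as a connected component of $M_1$.

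For the reverse direction: let $C$ be a connected component of $M_1$, pick $a\in C$, and let $A$ be the region of $a$. By the same discussion, $C\subseteq A$, and by Remark~\ref{rem:compdef}, $A$ is an $M$-set. Since $D$ is $M$-definable, one forms inside $M$ the graph $G\coloneqq(A,\{(x,y)\in A\times A\mid D(x,y)\})$. Now $C$ is maximally $D$-connected in $M_1$ and $C\subseteq A$, so no $D$-edge leaves $C$ within $A$ either; hence $C$ is a metatheoretic connected component of $G$.

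For the multiplicity clause: given $C$, use the previous paragraph to realise $C$ as a metatheoretic component of some graph $G$ in $M$. Inside $M$, form the disjoint union $G\times\omega$, which is again a graph in $M$ whose metatheoretic connected components include a copy of $C$ for every $n\in\omega^M$, and hence contain at least countably many copies of $C$ in the metatheory. Applying Proposition~\ref{pr:embedgraphs} to $G\times\omega$ and reusing the argument of the first paragraph, these copies correspond to pairwise distinct connected components of $M_1$, all isomorphic to $C$.

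The main subtle point throughout is the tension between internal and external notions of connectedness: when $M$ has non-standard integers, a relation that is transitively closed in $M$ need not be so in the metatheory, and restricting a graph in $M$ to an arbitrary $M$-subset can in principle split or merge metatheoretic components. The fact, built into Proposition~\ref{pr:embedgraphs}, that the constructed $H$ is a union of regions, and therefore closed under metatheoretic $M_1$-connectedness, is precisely what makes both inclusions go through smoothly.
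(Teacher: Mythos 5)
Your proposal is correct and follows essentially the same route as the paper: Proposition~\ref{pr:embedgraphs} for one inclusion, the region of $a$ viewed as a graph inside $M$ (via Remark~\ref{rem:compdef}) for the other, and internal disjoint unions for the multiplicity clause. The point you flag as subtle~---~that $H$ being a union of regions makes it closed under metatheoretic connectedness~---~is indeed exactly what the paper's argument relies on.
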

\begin{proof}
  Let $C$  be a connected component of a graph $G$ in $M$. By Proposition~\ref{pr:embedgraphs}  there is an isomorphic  copy $H$ of $G$ which is a union of regions of $M$, hence, in particular, of connected components of $M_1$. Clearly, one of the connected components of $H$ is isomorphic to $C$.
  
  In the other direction, let $a\in M_1$ and consider its connected component. Inside $M$, let $G$ be the region of $a$. Using Remark~\ref{rem:compdef} it is easy to see that $(G, D\restr G)$ is a graph in $M$, and one of its connected components is isomorphic to the connected component of $a$ in $M_1$.

  For the last part of the conclusion take, inside $M$, disjoint unions of copies of a given graph.
\end{proof}

If one does not assume some form of $\mathsf{AFA}$ and for instance merely drops Foundation, then double-membership graphs can be essentially arbitrary, as the following proposition shows. 
\begin{pr}
Let $M\models \mathsf{ZFC}$ and let $G$ be a graph in $M$.  There is a model $N$ of $\mathsf{ZFC}$ without  Foundation such that $N_1$ is isomorphic to the union of $G$ with infinitely many isolated vertices, i.e.\ points without any edges or self-loops.
\end{pr}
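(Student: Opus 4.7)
The plan is to realise $N$ as a Rieger permutation of $M$. Working inside $M$, we define a class bijection $\pi\from M\to M$ whose support is a single set, and take $N\coloneqq(M,\in_\pi)$ with $x\in_\pi y\iff x\in\pi(y)$. By the classical Rieger theorem on permutation models, this $N$ automatically satisfies every axiom of $\mathsf{ZFC}$ except possibly Foundation.

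To encode $G$, set $V=\dom G$ and fix inside $M$ a marker $K$ of rank strictly larger than every rank appearing in $V$ — e.g.\ $K=V_\alpha$ for a suitably large ordinal $\alpha$, so that in particular $K,\set{K}\notin V$. Using Kuratowski pairs, put $\phi_v\coloneqq(K,v)$ and $t_v\coloneqq(K,\phi_v)$, and for each $v\in V$ set
\[
  S_v\coloneqq\set{\phi_u\mid u\sim v\text{ in }G}\cup\set{t_v}.
\]
A routine rank/identity check shows that $\Phi=\set{\phi_v\mid v\in V}$, $T=\set{t_v\mid v\in V}$, and $\Psi=\set{S_v\mid v\in V}$ are pairwise disjoint, all three enumerations are injective, and the only $\in$-relations (in $M$) among $\Phi\cup T\cup\Psi$ are $\phi_u\in S_v$ for $u\sim v$ and $t_v\in S_v$. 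Let $\pi$ be the involution on $M$ swapping $\phi_v\leftrightarrow S_v$ for each $v\in V$ and fixing $M\setminus(\Phi\cup\Psi)$ pointwise.

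Finally, the double-membership graph of $N$ has the claimed shape. We compute $x\in_\pi y\in_\pi x$ as $x\in\pi(y)\wedge y\in\pi(x)$ and case-split on the positions of $x,y$ in $\Phi$, $\Psi$, or $M\setminus(\Phi\cup\Psi)$: when $x=\phi_u$ and $y=\phi_v$ both lie in $\Phi$, the condition unwinds, by the definition of $S_v$ and symmetry of $G$, to $u\sim v$, implanting a copy of $G$ on $\Phi$; every other configuration forces either a genuine $\in$-cycle in $M$ (ruled out by Foundation in $M$) or one of the memberships explicitly excluded above. Hence every element of $M\setminus\Phi$ is isolated in $N_1$, and since $\Phi\cup\Psi$ is an $M$-set while $M$ is infinite, infinitely many isolated vertices remain. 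The main obstacle is this case-analytic bookkeeping; the key simplification is that $M$ itself is well-founded, so every double-membership in $N$ must be induced by $\pi$, and the rank-separated encoding with marker $K$ localises the verification to a manageable list of configurations.
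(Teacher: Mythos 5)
Your proof is correct and follows essentially the same route as the paper's: a Rieger permutation of $M$ swapping each vertex code with the set coding its neighbourhood, with Foundation in $M$ ruling out every double edge other than the intended ones. The only differences are cosmetic --- the paper codes vertices by rank-$\kappa$ subsets of $\kappa=\dom G$ and assumes away isolated vertices, whereas your marker $K$ and tags $t_v$ achieve the same rank separation and injectivity without that reduction.
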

Note that the isolated vertices are necessary, as $N$ will always contain well-founded sets.
\begin{proof}
  Let $G$ be a graph in $M$, say in the language $\set{R}$. Assume without loss of generality that $G$ has no isolated vertices, and that $\dom G$ equals its cardinality $\kappa$. For each $i\in \kappa$ choose  $a_i\subseteq \kappa$ which has foundational rank $\kappa$ in $M$, e.g.\ let $a_i\coloneqq\kappa\setminus \set i$. Let $b_j\coloneqq\set{a_i\mid G\models R(i,j)}$ and note that, since no vertex of $G$ is isolated,  $b_j$ is non-empty, thus has rank $\kappa+1$.  Define $\pi\from M\to M$ to be the permutation swapping each $a_i$ with the corresponding $b_i$ and fixing the rest of $M$. Let $N$ be the structure with the same domain as $M$, but with membership relation defined as
  \[
    N\models x\in y\iff M\models x\in \pi(y)
  \]
  By~\cite[Section~3]{rieger}\footnote{Strictly speaking,~\cite{rieger} works in class theory. The exact statement we use is that of~\cite[Chapter~IV, Exercise~18]{kunen}.}, $N$ is a model of $\mathsf{ZFC}$ without Foundation. To check that $N_1$ is as required, first observe that
  \[
    N\models a_i\in a_j\iff M\models a_i\in \pi(a_j)=b_j\iff G\models R(i,j)
  \]
  so $\set{a_i\mid M\models i\in \kappa}$, equipped with the restriction of $D^{N_1}$, is isomorphic to $G$. To show that there are no other $D$-edges in $N_1$, assume that $N_1\models D(x,y)$, and consider the following three cases (which are exhaustive since $D$ is symmetric).
  \begin{enumerate}[label=(\roman*)]
  \item $x$ and $y$ are both fixed points of $\pi$. This contradicts Foundation in $M$.
  \item $y=a_i$ for some $i$, so $N\models x\in a_i$, hence $M\models x\in \pi(a_i)=b_i$. Then $x=a_j$ for some $j$ by construction.
  \item $y=b_i$ for some $i$. From $N\models x\in b_i$ we get  $M\models x\in a_i\subseteq \kappa$, thus $x$ has rank strictly less than $\kappa$. Therefore, $x$ is not equal to any $a_j$ or $b_j$, hence $\pi(x)=x$. Again by rank considerations, it follows that $M\models b_i\notin x=\pi(x)$, so $N\models b_i\notin x$, a contradiction.\qedhere
  \end{enumerate}
\end{proof}

\section{Continuum-Many Countable Models}\label{sec:cmany}

We now turn our attention to answering~\cite[Questions 1 and 2]{adamdaycameron}. Namely, we compute, via a type-counting argument, the number of non-isomorphic D-graphs of countable models of $\mathsf{ZFA}$ and the number of countable models of their complete theories. The analogous results for SD-graphs also hold.
\begin{defin}\label{defin:flowers} Let $n\in \omega\setminus\set{0}$. Define the $L_1$-formula
  \begin{multline*}
    \phi_n(x)\coloneqq \neg D(x,x)\land \exists \bla z0,{n-1} \Bigl(\bigl(\bigwedge_{0\le i<j<n}z_i\ne z_j\bigr)\\
   \land \bigl(\bigwedge_{0\le i<n}D(z_i, x)\bigr)\land \bigl(\forall z\; D(z,x)\implica \bigvee_{0\le i< n} z=z_i\bigr)
    \Bigr)
  \end{multline*}
For $A$ a subset of $\omega\setminus\set{0}$, define the set of $L_1$-formulas
  \begin{multline*}
    \beta_{A}(y)\coloneqq \set{\neg D(y,y)}\cup \set{\exists x_n\; \phi_n(x_n)\land D(y,x_n)\mid n\in A}\\\cup  \set{\neg(\exists x_n\; \phi_n(x_n)\land D(y,x_n))\mid n\in \omega\setminus(\set{0}\cup A)}
  \end{multline*}
We say that $a\in M_1$ is an \emph{$n$-flower} iff  $M_1\models \phi_n(a)$. We say that $b\in M_1$ is an \emph{$A$-bouquet} iff for all $\psi(y)\in \beta_{A}(y)$ we have $M_1\models \psi(b)$.
\end{defin}
So $a$ is an $n$-flower if and only if, in the D-graph, it is a point of degree $n$ without a self-loop, while $b$ is an $A$-bouquet iff it has no self-loop, it has $D$-edges to at least one $n$-flower for every $n\in A$, and it has no $D$-edges to any $n$-flower if $n\notin A$.

\begin{figure}[h]
  \centering \caption{The set $a=\set{\set{a,i}\mid i<5}$ is a $5$-flower. The reason for the name `$n$-flower' can be seen in this figure.}\label{figure:5flower}

  \begin{tikzpicture}[scale=2]
   
\node(y) at (0,0){$a$};
\node(0) at (0.59,0.81){$\set{a,0}$};
\node(1) at (-0.59,0.81){$\set{a,1}$};
\node(2) at (-0.95,-0.309){$\set{a,2}$};
\node(3) at (0,-1){$\set{a,3}$};
\node(4) at (0.95,-0.309){$\set{a,4}$};

\path[draw,bend left,decoration={
    markings,
    mark=at position 0.5 with {\arrow[black]{stealth};}}]
(y) edge[postaction=decorate] (0)
(0) edge[postaction=decorate] (y)
(y) edge[postaction=decorate] (1)
(1) edge[postaction=decorate] (y)
(y) edge[postaction=decorate] (2)
(2) edge[postaction=decorate] (y)
(y) edge[postaction=decorate] (3)
(3) edge[postaction=decorate] (y)
(y) edge[postaction=decorate] (4)
(4) edge[postaction=decorate] (y)
;
\end{tikzpicture}
\end{figure}
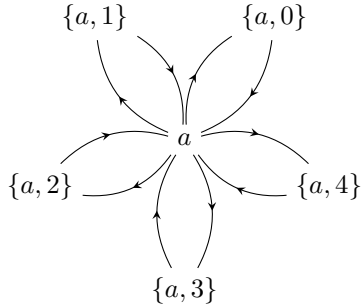

\begin{lemma}\label{lemma:bouquetsexist}
  Let $A_0$ be a finite subset of $\omega\setminus\set{0}$ and let $M\models \mathsf{ZFA}$. Then $M_1$ contains an  $A_0$-bouquet.
\end{lemma}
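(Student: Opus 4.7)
The plan is to build, inside $M$, a finite graph $G$ that already displays an $A_0$-bouquet together with its witnessing flowers, and then transfer it into $M_1$ via Proposition~\ref{pr:embedgraphs}. The key feature of that Proposition is that the image $H\subseteq M_1$ is a union of regions of $M$, which means that the $D^{M_1}$-neighbourhood of any vertex of $H$ lies entirely inside $H$. This rules out the worrying scenario in which our candidate bouquet accidentally acquires a $D$-edge to some flower of forbidden size sitting elsewhere in $M_1$.

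Concretely, define (inside $M$) $G$ to consist of a distinguished vertex $b$, a vertex $a_n$ for each $n\in A_0$, and, whenever $n\geq 2$, auxiliary vertices $c_{n,1},\ldots,c_{n,n-1}$. The edges of $G$ are the pairs $\set{b,a_n}$ for $n\in A_0$ and $\set{a_n,c_{n,i}}$ for $n\in A_0$ and $1\leq i\leq n-1$, and $G$ has no self-loops. In $G$, the vertex $a_n$ has degree exactly $n$ (its neighbours being $b$ together with the $c_{n,i}$), the vertex $b$ has degree $|A_0|$, and each $c_{n,i}$ is a leaf.

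Apply Proposition~\ref{pr:embedgraphs} to obtain $H\subseteq M_1$ such that $(H,D^{M_1}\restr H)\cong G$ and $H$ is a union of regions of $M$. Write $b^\ast, a_n^\ast, c_{n,i}^\ast\in H$ for the images of the corresponding vertices of $G$. Since $H$ is a union of regions, the $D^{M_1}$-neighbourhood in the whole of $M_1$ of any element of $H$ is already contained in $H$, hence, via the isomorphism, coincides with its $G$-neighbourhood. In particular, each $a_n^\ast$ has exactly $n$ distinct $D^{M_1}$-neighbours and no self-loop, so $M_1\models \phi_n(a_n^\ast)$; and the $D^{M_1}$-neighbourhood of $b^\ast$ in $M_1$ is exactly $\set{a_n^\ast\mid n\in A_0}$. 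Therefore $b^\ast$ has no self-loop, witnesses $\exists x_n\;\phi_n(x_n)\land D(b^\ast, x_n)$ for every $n\in A_0$, and has no $D^{M_1}$-neighbour that is an $m$-flower for any $m\in \omega\setminus(\set{0}\cup A_0)$. This is precisely what it means for $b^\ast$ to be an $A_0$-bouquet.

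The only real obstacle is the negative clause in the definition of an $A_0$-bouquet, which requires controlling the flowers adjacent to $b^\ast$ in the whole of $M_1$ rather than merely in some finite chunk of it; this control is supplied cleanly by the regions conclusion of Proposition~\ref{pr:embedgraphs}, which is why we route the argument through that statement rather than setting up an ad hoc flat system.
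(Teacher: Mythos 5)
Your proposal is correct and follows essentially the same route as the paper: the paper's proof is a one-line appeal to Proposition~\ref{pr:embedgraphs} ("it suffices to find a certain finite graph as a connected component of $M_1$"), and you have simply made that finite graph explicit and verified that the "union of regions" conclusion controls the $D$-neighbourhoods in all of $M_1$, which is exactly the point. No gaps.
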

\begin{proof}
It suffices to find a certain finite graph as a connected component of $M_1$, so this follows  from Proposition~\ref{pr:embedgraphs} (or directly from~\cite[Theorem~4]{adamdaycameron}).
\end{proof}
If $M$ is a structure, denote by $\Th(M)$ its theory.

\begin{pr}\label{pr:contrtypes}
  Let $M\models\mathsf{ZFA}$.  Then in $\Th(M_1)$  the $2^{\aleph_0}$ sets of formulas $\beta_A$, for $A\subseteq \omega\setminus \set0$, are each consistent, and pairwise contradictory.  In particular, the same is true in $\Th(M)$.
\end{pr}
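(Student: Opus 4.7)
The plan is to separate the statement into the three claims it makes: pairwise contradiction, individual consistency, and the transfer from $\Th(M_1)$ to $\Th(M)$. The pairwise contradiction is nearly immediate from how $\beta_A$ is defined: if $A\ne A'$, pick $n$ in the symmetric difference, say $n\in A\setminus A'$. Then $\beta_A$ contains the formula $\exists x_n\;\phi_n(x_n)\land D(y,x_n)$ while $\beta_{A'}$ contains its negation, so no single element of $M_1$ can realise both. This already yields the $2^{\aleph_0}$ lower bound on the number of partial types over $\emptyset$ once consistency is established.

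For consistency, I would argue by compactness: it is enough to check that every finite $\Sigma\subseteq\beta_A$ is satisfied in $M_1$. Such a $\Sigma$ mentions only finitely many positive constraints, indexed by some finite $A_0\subseteq A$, and finitely many negative constraints, indexed by some finite $B_0\subseteq \omega\setminus(\set{0}\cup A)$; in particular $A_0\cap B_0=\emptyset$. Apply Lemma~\ref{lemma:bouquetsexist} to obtain an $A_0$-bouquet $b\in M_1$. By the very definition of an $A_0$-bouquet, $b$ has no self-loop, is $D$-adjacent to at least one $n$-flower for each $n\in A_0$, and is $D$-adjacent to no $n$-flower for $n\notin A_0$; since $B_0\cap A_0=\emptyset$, the latter covers all negative constraints. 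Hence $M_1\models\psi(b)$ for every $\psi\in\Sigma$, which is what we needed.

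For the ``In particular'' clause, note that $D$ is $L$-definable, so every $L_1$-formula translates into an $L$-formula in the obvious way; any realisation of $\beta_A$ in a model of $\Th(M_1)$ extending to a model of $\Th(M)$ produces a realisation in $\Th(M)$ as well. Equivalently, since $M_1$ is a reduct of $M$, the partial types $\beta_A$ remain consistent and pairwise contradictory once reinterpreted in $L$.

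I do not anticipate a serious obstacle here: the whole argument rests on Lemma~\ref{lemma:bouquetsexist}, which itself is a direct consequence of Proposition~\ref{pr:embedgraphs} applied to a suitable finite graph (a central vertex connected to $|A_0|$ packets of flowers of the prescribed sizes). The only thing to be careful about is to record that $A_0$-bouquets automatically forbid adjacency to $n$-flowers for $n\notin A_0$, so a single bouquet takes care of both positive and negative requirements of any finite $\Sigma\subseteq\beta_A$ simultaneously, which is precisely why compactness suffices.
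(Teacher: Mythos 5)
Your proposal is correct and follows essentially the same route as the paper: pairwise contradiction read off directly from the positive/negative clauses of $\beta_A$, consistency via compactness reduced to finding an $A_0$-bouquet (Lemma~\ref{lemma:bouquetsexist}), and the transfer to $\Th(M)$ via the fact that $M_1$ is a reduct of $M$ in which $D$ is definable. Nothing is missing.
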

\begin{proof}
  If $A, B$ are distinct subsets of $\omega\setminus\set 0$ and, without loss of generality, there is an $n\in A\setminus B$, then $\beta_A$ contradicts $\beta_B$ because $\beta_A(y)\proves \exists x_n\; (\phi_n(x_n)\land D(y,x_n))$ and  $\beta_B(y)\proves \neg \exists x_n\; (\phi_n(x_n)\land D(y,x_n))$.
  
  To show that each $\beta_A$ is consistent it is enough, by compactness, to show that if $A_0$ is a finite subset of $A$ and $A_1$ is a finite subset of  $\omega\setminus(\set{0}\cup A)$ then there is some $b\in M$ with a $D$-edge to an $n$-flower for every $n\in A_0$ and no $D$-edges to $n$-flowers whenever $n\in A_1$. Any $A_0$-bouquet will satisfy these requirements and, by Lemma~\ref{lemma:bouquetsexist}, an $A_0$-bouquet exists inside $M_1$.
  
  For the last part, note that all the theories at hand are complete (in different languages), and whether or not an intersection of definable sets is empty does not change after adding more definable sets.
\end{proof}
To conclude, we need the following standard fact from model theory.
\begin{fact}\label{fact:manytypes}
Every partial type over $\emptyset$ of a countable theory can be realised in a countable model.
\end{fact}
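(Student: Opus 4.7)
The plan is a standard compactness-plus-Löwenheim–Skolem argument; since the result is a textbook fact, I view this less as a proof to discover and more as a sketch of the few steps one would write down.

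First I would fix a countable $L$-theory $T$ and a partial type $p(x)$ over $\emptyset$, meaning a set of $L$-formulas in one free variable such that $T \cup p(x)$ is finitely satisfiable. I would expand the language by a fresh constant symbol $c$, setting $L' \coloneqq L \cup \{c\}$, and consider the $L'$-theory $T' \coloneqq T \cup \{\phi(c) : \phi(x) \in p(x)\}$. By the finite satisfiability hypothesis on $p$, any finite subset of $T'$ has a model — interpret $c$ as a witness for the finitely many formulas of $p$ that appear — so by compactness $T'$ is consistent and admits a model $N$.

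Since $L'$ remains countable, I would then apply the downward Löwenheim–Skolem theorem to obtain a countable elementary substructure $N_0 \preceq N$ with $c^N \in N_0$. Taking $M$ to be the $L$-reduct of $N_0$ yields a countable $L$-structure with $M \models T$ and with $c^N$ realising $p(x)$ in $M$, as required. Alternatively, one could short-circuit this by running the Henkin construction inside the countable language $L'$ directly, producing a countable model of $T'$ in one step.

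There is no genuine obstacle here: the only conceptual point is to ensure that the realising element is not lost when passing to the countable reduct, which is arranged by requiring $c^N \in N_0$, and to keep track of the fact that the expanded language is still countable so that Löwenheim–Skolem (or the Henkin construction) delivers a countable witness.
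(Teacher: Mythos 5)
Your argument is correct and is exactly the standard compactness-plus-downward-L\"owenheim--Skolem proof that this Fact tacitly relies on; the paper itself states it without proof as a standard fact, so there is nothing to contrast with. The one point worth being careful about --- ensuring the witness $c^N$ survives the passage to a countable elementary substructure and that the expanded language $L\cup\set{c}$ is still countable --- is precisely the point you address, so the proposal is complete as written.
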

\begin{co}\label{co:manymodels}
  Let $M$ be a model of $\mathsf{ZFA}$. There are $2^{\aleph_0}$ countable models of $\mathsf{ZFA}$ such that their D-graphs (resp.~SD-graphs) are elementarily equivalent to $M_1$ (resp.~$M_0$) and  pairwise non-isomorphic. 
\end{co}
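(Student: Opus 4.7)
The plan is to invoke Proposition~\ref{pr:contrtypes} and Fact~\ref{fact:manytypes} and then perform a cardinality argument. For each $A\subseteq \omega\setminus\set{0}$, Proposition~\ref{pr:contrtypes} tells us that $\beta_A(y)$ is a consistent partial type over $\emptyset$ of the countable theory $\Th(M)$, so Fact~\ref{fact:manytypes} yields a countable $N_A\models\Th(M)$ in which $\beta_A$ is realised. Because $\mathsf{ZFA}\subseteq\Th(M)$, each $N_A$ is a model of $\mathsf{ZFA}$, and since elementary equivalence passes to reducts we immediately get $(N_A)_1\equiv M_1$ and $(N_A)_0\equiv M_0$.

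It then remains to extract $2^{\aleph_0}$ isomorphism types from the family $\set{(N_A)_1\mid A\subseteq \omega\setminus\set{0}}$. Since the $\beta_A$ are pairwise contradictory, any two realisations of distinct $\beta_A$ have distinct complete $1$-types over $\emptyset$; but a countable structure realises only countably many complete $1$-types over $\emptyset$, so any fixed countable D-graph can be isomorphic to $(N_A)_1$ for at most countably many $A$. Partitioning the continuum-sized index set $\set{A\mid A\subseteq \omega\setminus\set{0}}$ by the isomorphism class of $(N_A)_1$ therefore yields $2^{\aleph_0}$ classes, whence $2^{\aleph_0}$ pairwise non-isomorphic countable D-graphs, each elementarily equivalent to $M_1$. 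The SD-graph version is identical: every formula appearing in $\beta_A$ is an $L_1$-formula and hence an $L_0$-formula, so the same models $N_A$ produce $2^{\aleph_0}$ countable SD-graphs elementarily equivalent to $M_0$ and pairwise non-isomorphic.

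No substantive obstacle is expected: the real combinatorial and set-theoretic work has already been carried out in Proposition~\ref{pr:embedgraphs}, Lemma~\ref{lemma:bouquetsexist}, and Proposition~\ref{pr:contrtypes}. What remains is the standard model-theoretic observation that a countable structure cannot realise more than countably many pairwise incompatible partial types, which turns the continuum of inconsistent $\beta_A$ into a continuum of non-isomorphic countable reducts.
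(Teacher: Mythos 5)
Your proposal is correct and follows essentially the same route as the paper: consistency and pairwise contradiction of the partial types $\beta_A$ (Proposition~\ref{pr:contrtypes}), realisation in countable models via Fact~\ref{fact:manytypes}, and the observation that a countable structure realises only countably many of the pairwise contradictory $\beta_A$, which are $L_1$-formulas and hence isomorphism invariants of the reducts. The only cosmetic difference is that you phrase the final step as counting the fibres of the map $A\mapsto$ isomorphism class of $(N_A)_1$, where the paper speaks of models realising different subsets of $\set{\beta_A}$; the content is the same.
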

\begin{proof}
  Consider the pairwise contradictory partial types $\beta_A$. By Fact~\ref{fact:manytypes}, $\Th(M)$ has $2^{\aleph_0}$ distinct countable models, as each of them can only realise countably many of the $\beta_A$. The reducts to $L_1$ (resp.~$L_0$) of models realising different subsets of  $\set{\beta_A\mid A\subseteq \omega\setminus\set 0}$ are still non-isomorphic, since the $\beta_A$ are partial types in the language $L_1$.
\end{proof}
 The previous Corollary answers affirmatively~\cite[Questions~1 and~2]{adamdaycameron}.

\begin{rem}
For the results in this section to hold, it is not necessary that $M$ satisfies the whole of $\mathsf{ZFA}$. It is enough to be able to prove Lemma~\ref{lemma:bouquetsexist} for $M$, and it is easy to see than one can provide a direct proof whenever in $M$ it is possible to define infinitely many different well-founded sets, e.g.\ von Neumann natural numbers, and to ensure existence of solutions to flat systems of equations. This can be done as long as $M$ satisfies Extensionality, Empty Set, Pairing,  and $\mathsf{AFA}_1$\footnote{Stated using a sensible coding of flat systems, which can be carried out using Pairing.}.  If we replace, in Definition~\ref{defin:flatsystem}, `$x=S_x$' with `$x$ and $S_x$ have the same elements', then we can even drop Extensionality.
\end{rem}

\section{Common Theory}\label{sec:theory}
The main aim of this section is to study the common theory   of the class of D-graphs of $\mathsf{ZFA}$. We show in Corollary~\ref{co:concon} that it is incomplete,  and in Corollary~\ref{co:completions} characterise its completions  in terms of collections of consistency statements. Furthermore, we show that each of these completions is untame in the sense of neostability theory (Corollary~\ref{co:untame}) and has a countable model which is not a D-graph, and that the same holds for SD-graphs (Corollary~\ref{co:q5}), therefore solving negatively~\cite[Question~5]{adamdaycameron}.

\begin{defin}
  Let $K_1$ be the class of D-graphs of models of $\mathsf{ZFA}$. Let $\Th(K_1)$ be its common $L_1$-theory.
\end{defin}

\begin{defin}
  Let $\phi$ be an $L_1$-sentence. We define an $L_1$-sentence $\mu(\phi)$ as follows.  Let $x$ be a variable not appearing in $\phi$. Let $\chi(x)$ be obtained from $\phi$ by relativising $\exists y$ and $\forall y$ to $D(x,y)$. Let $\mu(\phi)$ be the formula $\exists x\; (\neg D(x,x)\land \chi(x))$. 
\end{defin}
In other words, $\mu(\phi)$ can be thought of as saying that there is a point whose set of neighbours is a model of $\phi$.

\begin{rem}
Suppose $\phi$ is a `standard' sentence, i.e.\ one which is a formula in the sense of the metatheory, say in the finite language $L'$. Let $M\models \mathsf{ZFA}$, and let $N$ be an $L'$-structure in $M$. Then, whether $N\models \phi$ or not is absolute between $M$ and the metatheory. Every formula we mention is of this kind, and this fact will be used tacitly from now on.
\end{rem}
\begin{defin}
  Let $\Phi$ be the set of $L_1$-sentences which imply $\forall x,y\;(D(x,y)\implica D(y,x))$.
\end{defin}
\begin{lemma}\label{lemma:muworks}
  For every $L_1$-sentence $\phi\in \Phi$ and every $M\models \mathsf{ZFA}$ we have
    \[
M\models \Con(\phi)\iff M_1\models \mu(\phi)
\]
Moreover, if this is the case, then there is $H\subseteq M_1$ such that
  \begin{enumerate}
  \item $(H, D^{M_1}\restr H)$ satisfies $\phi$,
  \item $H$ is a union of regions of $M$, and
  \item $H$ is an $M$-set.
  \end{enumerate}
  \end{lemma}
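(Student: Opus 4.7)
The plan is to prove the two implications separately, and then derive the moreover clause from them together with Proposition~\ref{pr:embedgraphs}. I begin with $(\Leftarrow)$. Suppose $M_1 \models \mu(\phi)$, witnessed by some $a$ with $\neg D(a,a)$. The metatheoretic $D$-neighbourhood of $a$, namely $H' \coloneqq \set{b \in M\mid M\models b \in a \land a \in b}$, coincides with a set definable inside $M$ by Separation on $a$, so $H'$ is an $M$-set, and its $D$-restriction is too. By the very definition of $\mu(\phi)$, the structure $(H', D \restr H')$ satisfies $\phi$ in the metatheory; since $\phi$ is a standard sentence, the absoluteness remark preceding the Lemma ensures that $M$ agrees, and hence $M \models \Con(\phi)$.

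For $(\Rightarrow)$, suppose $M \models \Con(\phi)$ and, working in $M$, fix a model $(V, R)$ of $\phi$. Since $\phi \in \Phi$, the relation $R$ is symmetric, so $(V,R)$ is a graph in $M$. Up to isomorphism, take $V$ to be an ordinal $\kappa$. Adapting the flat system from Proposition~\ref{pr:embedgraphs}, I add a new hub indeterminate $z$ and consider
\[
x_i = \set{(0,i),\, z} \cup \set{x_j \mid R(i,j)}, \qquad z = \set{x_i \mid i \in \kappa}.
\]
By $\mathsf{AFA}$, there is a solution $x_i \mapsto a_i$, $z \mapsto c$. The tag $(0,i)$ makes the $a_i$ pairwise distinct and distinct from $c$; in particular $c \notin c$, so $\neg D(c,c)$. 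The $D$-neighbours of $c$ are exactly $\set{a_i \mid i \in \kappa}$: for any $b \in M$, $D(c,b)$ forces $b \in c$, whence $b = a_i$ for some $i$, and conversely $z \in x_i$ holds by construction. As in the proof of Proposition~\ref{pr:embedgraphs}, $D(a_i, a_j) \iff R(i,j)$, so the map $a_i \mapsto i$ is an isomorphism from the $D$-neighbourhood of $c$ in $M_1$ onto $(V, R)$. Since $(V, R) \models \phi$ inside $M$, absoluteness gives the same in the metatheory, and transferring along the isomorphism yields $M_1 \models \chi(c)$, i.e.\ $M_1 \models \mu(\phi)$.

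For the moreover clause, $M \models \Con(\phi)$ by the established equivalence, so I fix some $(V, R) \models \phi$ inside $M$ and apply Proposition~\ref{pr:embedgraphs} directly to it; this produces an $H \subseteq M_1$ which is an $M$-set, a union of regions, and isomorphic as a D-graph to $(V, R)$, whence $(H, D^{M_1} \restr H) \models \phi$. The principal obstacle is in the forward direction: one must verify that the modified flat system yields a hub $c$ whose $D$-neighbourhood is precisely the copy of $(V,R)$, with no spurious edges and no self-loop. This reduces to an extensionality calculation showing that the tag $(0,i)$ genuinely separates the $a_i$ from each other and from $c$ in every solution, which is routine but is the one step that cannot simply black-box the earlier proposition.
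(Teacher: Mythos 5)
Your proof is correct and follows essentially the same route as the paper's: the backward direction reads off a graph in $M$ from the $D$-neighbourhood of a witness, and the forward direction realises a model of $\phi$ as the $D$-neighbourhood of a loop-free apex point, with the moreover clause coming from Proposition~\ref{pr:embedgraphs}. The only difference is that you construct the apex by hand with a bespoke flat system, whereas the paper obtains it by applying Proposition~\ref{pr:embedgraphs} to the graph augmented with a new vertex joined to everything --- so the verification you flag as the one step that cannot be black-boxed can, in fact, be black-boxed.
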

  \begin{proof}
    Note that the class of graphs in $M$ is closed under the operations of removing a point or adding one and connecting it to everything. Now apply Proposition~\ref{pr:embedgraphs}.
  \end{proof}
Define $L_{\mathsf{NBG}}\coloneqq\set{E}$, where $E$ is a binary  relational symbol.   We think of $L_1$ as `the language of graphs' and  of $L_{\mathsf{NBG}}$ as `the language of digraphs', specifically, digraphs that are models of a certain class theory (see below), hence the notation. It is well-known that every digraph is interpretable in a graph, and that such an interpretation may be chosen to be uniform, in the sense below. See e.g.~\cite[Theorem~5.5.1]{hodges}.

\begin{fact}\label{fact:interpretation}
  Every $L_{\mathsf{NBG}}$-structure $N$ is interpretable in a graph $N'$. Moreover, for every $L_{\mathsf{NBG}}$-sentence $\theta$ there is an $L_1$-sentence $\theta'$ such that
  \begin{enumerate}
  \item $\theta$ is consistent if and only if $\theta'$ is, and
  \item for every $L_{\mathsf{NBG}}$-structure $N$ we have $N\models \theta\iff N'\models\theta'$.
  \end{enumerate}
\end{fact}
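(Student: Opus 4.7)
The plan is to construct the graph $N'$ by replacing each directed $E$-edge of $N$ with a small rigid gadget, in such a way that both the set of original vertices of $N$ and the direction of each edge become picked out in $N'$ by fixed first-order $L_1$-formulas. Concretely, I would take the vertex set of $N'$ to consist of the universe of $N$ together with, for each ordered pair $(a,b)$ with $N\models E(a,b)$, a finite collection of fresh auxiliary vertices forming an edge-gadget $G_{a,b}$. To separate original vertices from auxiliary ones, I would hang a rigid \emph{tag} (say a triangle built from three new vertices) off each vertex $v$ of $N$, chosen so that no edge-gadget contains a copy of this configuration; an $L_1$-formula $\alpha(x)$ saying ``$x$ is the distinguished corner of some triangle of the right type'' then picks out exactly the original vertices.

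For each $(a,b)$ with $N\models E(a,b)$, the gadget $G_{a,b}$ should be a short path between $a$ and $b$ with an asymmetric decoration that distinguishes the source from the target (for instance, attach an additional pendant vertex on the side incident to $a$ but not on the side incident to $b$). Writing $\delta(x,y)$ for the $L_1$-formula ``there is a subgraph of the prescribed shape whose source end is $x$ and whose target end is $y$'', we obtain a definable copy of the digraph $E$ on $\alpha(N')$. The translation $\theta\mapsto\theta'$ is then defined in the obvious way: relativise every quantifier of $\theta$ to $\alpha$, and replace each atomic $E(x,y)$ by $\delta(x,y)$. Item~(2) of the conclusion is immediate from the construction, since the interpreted structure on $\alpha(N')$ is isomorphic to $N$ via the identity on original vertices. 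For item~(1), the forward direction follows by applying the construction to any $N\models\theta$; conversely, if $M\models\theta'$, then the $L_{\mathsf{NBG}}$-structure with universe $\alpha(M)$ and edge relation $\delta$ satisfies $\theta$, so consistency of $\theta'$ gives consistency of $\theta$.

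The main delicacy is \emph{rigidity}: one must ensure that no spurious configuration of the tag- or gadget-shape can arise in $N'$ by accidentally combining pieces of different gadgets and tags, so that $\alpha$ and $\delta$ have the intended extensions in $N'$. This is handled by choosing the tag and the edge-gadget with a sufficiently characteristic local shape (e.g.\ prescribing a distinctive degree pattern inside each gadget and making the only interface to the rest of $N'$ happen at the original-vertex endpoints) and then performing a routine finite case analysis on neighbourhoods; this is the content of the textbook construction cited from Hodges.
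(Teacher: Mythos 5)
Your proposal is correct and is essentially the standard edge-gadget interpretation that the paper invokes without proof by citing~\cite[Theorem~5.5.1]{hodges}: tag the original vertices, replace each directed edge by an asymmetrically decorated path, relativise quantifiers to the tag formula and translate $E$ by the gadget formula. The only detail worth making explicit is that $\theta'$ should be taken to include the conjunct $\exists x\,\alpha(x)$ (and, if needed, a clause handling loops $E(a,a)$), so that the backward direction of item~(1) does not fail vacuously in a model where $\alpha$ has empty extension.
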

  \begin{co}\label{co:muphiprime}
  For every  $L_{\mathsf{NBG}}$-sentence $\theta$, let $\theta'$ be as in Fact~\ref{fact:interpretation}. For all $M\models \mathsf{ZFA}$ \[M\models \Con(\theta)\iff M_1\models \mu(\theta')\]
\end{co}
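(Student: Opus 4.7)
The plan is to derive Corollary~\ref{co:muphiprime} by chaining Lemma~\ref{lemma:muworks} with Fact~\ref{fact:interpretation}. Given an $L_{\mathsf{NBG}}$-sentence $\theta$, I would apply Lemma~\ref{lemma:muworks} to the $L_1$-sentence $\theta'$ supplied by the interpretation result, obtaining
\[
M\models \Con(\theta')\iff M_1\models \mu(\theta'),
\]
and then transfer $\Con(\theta')$ to $\Con(\theta)$ using the metatheoretic equivalence in Fact~\ref{fact:interpretation}(1), internalised in $M$.

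Two small points need checking. First, Lemma~\ref{lemma:muworks} applies only to sentences in $\Phi$, so I need $\theta'\in \Phi$. The sentence $\theta'$ supplied by Fact~\ref{fact:interpretation} is crafted so that the graphs $N'$ interpreting structures $N\models \theta$ satisfy it; in particular it already has symmetric models. Without loss of generality, replace $\theta'$ by $\theta'\wedge \forall x,y\;(D(x,y)\to D(y,x))$: this conjunct is already satisfied by the graphs $N'$, so condition (2) of Fact~\ref{fact:interpretation} is preserved; and condition (1) is preserved as well, since a model of the original $\theta'$ can always be symmetrised without loss. After this harmless adjustment, $\theta'\in \Phi$.

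Second, I need the internal equivalence $M\models \Con(\theta)\leftrightarrow \Con(\theta')$. The construction $\theta\mapsto \theta'$ of Fact~\ref{fact:interpretation} is uniform and syntactic, and its consistency-preservation is a theorem of a weak fragment of arithmetic (hence a fortiori of $\mathsf{ZFA}$); in particular it holds in $M$. Chaining the two biconditionals yields
\[
M\models \Con(\theta)\iff M\models \Con(\theta')\iff M_1\models \mu(\theta'),
\]
which is the desired statement. I do not expect a real obstacle here; the only subtlety is the pedantic verification that $\theta'$ may be taken in $\Phi$ and that the consistency equivalence of Fact~\ref{fact:interpretation}(1) transfers into $M$, both of which are routine once spelled out.
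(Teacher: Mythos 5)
Your proposal is correct and follows essentially the same route as the paper, whose entire proof is ``Apply Lemma~\ref{lemma:muworks} to $\phi\coloneqq\theta'$''; you have merely made explicit the two points the paper leaves tacit, namely that $\theta'$ may be taken in $\Phi$ and that the equivalence $\Con(\theta)\coimplica\Con(\theta')$ from Fact~\ref{fact:interpretation}(1) internalises to $M$. Both verifications are handled correctly.
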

\begin{proof}
Apply Lemma~\ref{lemma:muworks} to $\phi\coloneqq \theta'$.
\end{proof}

\begin{co}\label{co:untame}
Let $M\models \mathsf{ZFA}$. Then every model of $\Th(M_1)$ interprets  with parameters  arbitrarily large finite fragments of $\mathsf{ZFC}$. In particular $\Th(M_1)$ has  $\mathsf{SOP}$, $\mathsf{TP_2}$, and $\mathsf{IP}_k$ for all $k$. 
\end{co}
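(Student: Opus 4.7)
The strategy is to combine Corollary~\ref{co:muphiprime} with the classical fact that $\mathsf{ZFA}$ proves the consistency of every finite fragment of $\mathsf{ZFC}$. Fix a finite $T\subseteq \mathsf{ZFC}$ and set $\theta\coloneqq\bigwedge T$, viewed as an $L_{\mathsf{NBG}}$-sentence (the language of $\mathsf{ZFC}$ and $L_{\mathsf{NBG}}$ both consist of a single binary relation symbol). Inside $M\models\mathsf{ZFA}$, the class of well-founded sets models $\mathsf{ZFC}$ (this is what underlies Fact~\ref{fact:equicon}), and applying the Reflection Theorem within this class yields a set-sized model of $T$; hence $M\models \Con(\theta)$. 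By Corollary~\ref{co:muphiprime}, $M_1\models \mu(\theta')$, so the single $L_1$-sentence $\mu(\theta')$ belongs to $\Th(M_1)$.

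Now let $N_1\models\Th(M_1)$. Then $N_1\models\mu(\theta')$, so there exists $a\in N_1$ without a self-loop such that the induced graph on $\set{b : N_1\models D(a,b)}$ satisfies $\theta'$. By Fact~\ref{fact:interpretation}, this graph interprets a model of $\theta$, i.e.\ a model of $T$. Composing with the definition of the $D$-neighbourhood of $a$ yields an interpretation with parameters of a model of $T$ inside $N_1$. Since $T$ was an arbitrary finite fragment of $\mathsf{ZFC}$, this proves the first assertion of the corollary.

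For the \emph{in particular} clause, observe that a sufficiently large finite fragment of $\mathsf{ZFC}$ already proves the existence of structures known to witness the relevant dividing lines: an infinite linear order (yielding $\mathsf{SOP}$), a copy of $(\omega,+,\cdot)$ or of the random graph (yielding $\mathsf{IP}_k$ for every $k$), and, for $\mathsf{TP}_2$, an appropriate array structure (for instance, one can interpret an infinite family of infinite almost disjoint sets, or just use that $\mathsf{PA}$ has $\mathsf{TP}_2$). Since $\mathsf{SOP}$, $\mathsf{IP}_k$, and $\mathsf{TP}_2$ are preserved by (parameter) interpretations, the corresponding witnessing formulas pull back through the interpretation above to give formulas of $\Th(M_1)$ with the same properties.

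The only mildly substantive point in this plan is the very first step, namely verifying $\mathsf{ZFA}\vdash\Con(T)$ for each finite $T\subseteq\mathsf{ZFC}$: everything else is a mechanical application of Corollary~\ref{co:muphiprime} and Fact~\ref{fact:interpretation}, which have been engineered precisely to translate internal consistency statements into single $L_1$-sentences of $\Th(M_1)$. In particular, no further work on the D-graph side is needed.
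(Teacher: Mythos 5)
Your proposal is correct and follows essentially the same route as the paper: apply Corollary~\ref{co:muphiprime} to the conjunction $\theta$ of a finite fragment of $\mathsf{ZFC}$, using $\mathsf{ZFA}\proves\Con(\theta)$, and read off the interpretation from a witness of the outermost existential in $\mu(\theta')$. The only difference is that you spell out details the paper leaves implicit (the Reflection argument for $\Con(\theta)$ and the transfer of $\mathsf{SOP}$, $\mathsf{TP_2}$, $\mathsf{IP}_k$ along interpretations), which is fine.
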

\begin{proof}
If $\theta$ is the conjunction of a finite fragment of $\mathsf{ZFC}$, it is well-known that $\mathsf{ZFA}\proves \Con(\theta)$.  Since a model of  $\theta$ is a digraph,  we can apply Corollary~\ref{co:muphiprime}. If $a$ witnesses the outermost existential quantifier in $\mu(\theta')$, then $\theta$ is interpretable with parameter $a$.
\end{proof}
We now want to use Corollary~\ref{co:muphiprime} to show that the common theory $\Th(K_1)$ of the class of D-graphs of models of $\mathsf{ZFA}$ is incomplete. Naively, this could be done by choosing $\theta$ to be a finite axiomatisation of some theory equiconsistent with $\mathsf{ZFA}$, and then invoking the Second Incompleteness Theorem. For instance, one could choose von Neumann-Bernays-G\"odel class theory $\mathsf{NBG}$, axiomatised in the language $L_\mathsf{NBG}$\footnote{The reader may have encountered an axiomatisation using two sorts; this can be avoided by declaring sets to be those classes that are elements of some other class.}, as this is known to be equiconsistent with $\mathsf{ZFC}$ (see~\cite{felgner}), hence with $\mathsf{ZFA}$. The problem with this argument is that, in order for it to work, we need a further set-theoretical assumption in our metatheory, namely $\Con(\mathsf{ZFC}+\Con(\mathsf{ZFC}))$. This can be avoided by using another sentence whose consistency is independent of $\mathsf{ZFA}$, provably in $\mathsf{ZFC}+\Con(\mathsf{ZFC})$ alone. We would like to thank Michael Rathjen for pointing out to us the existence of such a sentence.

Let $\mathsf{NBG}^-$ denote $\mathsf{NBG}$ without the axiom of Infinity. We will use  special cases of a classical theorem of Rosser and of a related result. For proofs of these, together with their more general statements,  we refer the reader to~\cite[Chapter~7, Application~2.1 and Corollary~2.6]{smorynskitw}.

\begin{fact}[Rosser's Theorem]
  There is a $\Pi^0_1$ arithmetical statement $\psi$ which is independent of $\mathsf{ZFA}$.
\end{fact}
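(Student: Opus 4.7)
The plan is to apply the standard Rosser construction to $\mathsf{ZFA}$. Three things need verification: that $\mathsf{ZFA}$ is a consistent, recursively axiomatised theory interpreting enough arithmetic; that the Rosser fixed point can be chosen so as to be $\Pi^0_1$; and that the usual independence argument goes through.

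Consistency of $\mathsf{ZFA}$ is immediate from Fact~\ref{fact:equicon} together with our standing metatheoretic assumption $\Con(\mathsf{ZFC})$. Recursive axiomatisability is clear, and interpretability of (more than enough) arithmetic is standard, since the von Neumann naturals live inside the well-founded part of any model of $\mathsf{ZFA}$. By the diagonal lemma applied inside $\mathsf{ZFA}$, there is then a sentence $\psi$ such that
\[
\mathsf{ZFA}\proves\psi\coimplica\forall x\,\bigl(\mathrm{Prf}_{\mathsf{ZFA}}(x,\ulcorner\psi\urcorner)\implica\exists y\le x\;\mathrm{Prf}_{\mathsf{ZFA}}(y,\ulcorner\neg\psi\urcorner)\bigr),
\]
where $\mathrm{Prf}_{\mathsf{ZFA}}$ is the usual $\Delta^0_1$ proof predicate. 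The right-hand side is $\Pi^0_1$, because the inner existential is bounded, and we take $\psi$ to be this formula itself, so that it is visibly arithmetical and of the required complexity.

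Independence is now the standard Rosser argument. Suppose for contradiction that $\mathsf{ZFA}\proves\psi$, and let $n$ be the numerical code of some such proof. By $\Sigma^0_1$-completeness, $\mathsf{ZFA}\proves\mathrm{Prf}_{\mathsf{ZFA}}(\bar n,\ulcorner\psi\urcorner)$, so by the fixed-point equivalence $\mathsf{ZFA}\proves\exists y\le\bar n\;\mathrm{Prf}_{\mathsf{ZFA}}(y,\ulcorner\neg\psi\urcorner)$; consistency rules out each of the finitely many potential witnesses individually, and collating these rulings inside $\mathsf{ZFA}$ yields $\mathsf{ZFA}\proves\neg\psi$, a contradiction. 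The case $\mathsf{ZFA}\proves\neg\psi$ is dual.

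The only genuinely delicate point, which we expect to be the main obstacle in a detailed write-up, is checking that all of the $\Sigma^0_1$-completeness steps can be formalised inside $\mathsf{ZFA}$ rather than merely in the metatheory. Since $\mathsf{ZFA}$ contains $\mathsf{ZFC}$ relativised to the well-founded sets, this goes through by the classical arguments, and we therefore simply defer to~\cite[Chapter~7, Application~2.1]{smorynskitw}, which handles the general case uniformly.
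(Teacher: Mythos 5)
Your sketch is the standard Rosser construction, correctly adapted to $\mathsf{ZFA}$ (whose consistency indeed follows from Fact~\ref{fact:equicon} together with the ambient assumption $\Con(\mathsf{ZFC})$, and which interprets enough arithmetic via the von Neumann naturals). The paper gives no proof of its own but cites this fact as a special case of the classical theorem in~\cite[Chapter~7, Application~2.1]{smorynskitw} --- precisely the argument you outline, and the same reference you defer to for the formalisation details --- so your approach coincides with the paper's.
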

\begin{fact}\label{fact:eqcon}
  Let $\psi$ be a $\Pi^0_1$ arithmetical statement.  There is another arithmetical statement $\widetilde \psi$ such that $\mathsf{ZFA}\proves \psi\coimplica \Con(\mathsf{NBG}^-+\widetilde\psi)$.
\end{fact}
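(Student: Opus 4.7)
The plan is to construct $\widetilde\psi$ via a G\"odel-style diagonal-lemma argument, then verify the biconditional using the $\Sigma^0_1$-completeness of $\mathsf{NBG}^-$ formalised inside $\mathsf{ZFA}$. Crucially, $\mathsf{NBG}^-$ is strong enough to internalise its own proof predicate and $\Sigma^0_1$-completeness, while being weak enough that $\mathsf{ZFA}$ can reason externally about its models.

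The simplest first attempt is to take $\widetilde\psi \coloneqq \psi$, reducing the claim to $\mathsf{ZFA}\proves \psi\coimplica \Con(\mathsf{NBG}^-+\psi)$. The right-to-left direction is immediate from formal $\Sigma^0_1$-completeness: if $\neg\psi$ held then, since $\neg\psi$ is $\Sigma^0_1$, $\mathsf{NBG}^-$ would prove $\neg\psi$, contradicting $\Con(\mathsf{NBG}^-+\psi)$, and this implication formalises in $\mathsf{ZFA}$ without fuss. The converse requires $\mathsf{ZFA}$ to prove $\Sigma^0_1$-soundness of $\mathsf{NBG}^-$, which is available by exhibiting the model $(V_\omega,\mathcal{P}(V_\omega))$ of $\mathsf{NBG}^-$: its interpretation of arithmetic is the standard one, so any $\Sigma^0_1$ theorem of $\mathsf{NBG}^-$ must hold there and therefore in the ambient integers as well.

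The principal obstacle is avoiding over-reliance on soundness principles when working inside $\mathsf{ZFA}$, since a careful account would want the biconditional certified from $\Sigma^0_1$-completeness alone. The standard workaround---and presumably the route taken in the Smory\'nski reference---is a Rosser-style diagonal construction. Writing $\psi$ as $\forall x\,\phi(x)$ with $\phi\in\Delta^0_0$, one applies the Fixed Point Lemma to obtain $\widetilde\psi$ asserting, essentially, that every $\mathsf{NBG}^-$-proof of $\neg\widetilde\psi$ is longer than some witness to $\neg\phi$. The usual Rosserian symmetry then delivers both directions of the equivalence from $\Sigma^0_1$-completeness alone: if $\psi$ holds, any purported $\mathsf{NBG}^-$-refutation of $\widetilde\psi$ is contradicted by the absence of witnesses to $\neg\phi$, so $\mathsf{NBG}^-+\widetilde\psi$ is consistent; if $\psi$ fails, the least witness to $\neg\phi$ allows $\mathsf{NBG}^-$ to formally refute $\widetilde\psi$, so $\mathsf{NBG}^-+\widetilde\psi$ is inconsistent. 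This produces the required $\widetilde\psi$.
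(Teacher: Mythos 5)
Your proposal is correct, and in fact your first, ``simplest'' attempt already constitutes a complete proof of this special case --- whereas the paper offers no proof at all, deferring to Smory\'nski's general results (Rosser's theorem and the representation of $\Pi^0_1$ sentences as consistency statements, whose general form does require the Rosser-style fixed point you describe, because there the verifying theory need not prove the consistency of the object theory). Taking $\widetilde\psi\coloneqq\psi$: the direction $\Con(\mathsf{NBG}^-+\psi)\implica\psi$ is provable $\Sigma^0_1$-completeness, as you say, and the direction $\psi\implica\Con(\mathsf{NBG}^-+\psi)$ goes through because $\mathsf{ZFA}$ proves that $(V_\omega,\mathcal P(V_\omega))$ is a model of the finitely axiomatised $\mathsf{NBG}^-$ whose arithmetic is the standard one (note that in $\mathsf{ZFA}$ every subset of $V_\omega$ is still well-founded, so Foundation holds in this structure), hence proves the arithmetical soundness of $\mathsf{NBG}^-$. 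Your subsequent worry about ``over-reliance on soundness principles'' is misplaced: the statement only asks for provability in $\mathsf{ZFA}$, and the availability of this soundness argument is precisely why the paper works with $\mathsf{NBG}^-$ rather than $\mathsf{NBG}$ --- for $\mathsf{NBG}$ the left-to-right direction would fail, since $\mathsf{ZFA}\not\proves\Con(\mathsf{NBG})$, and only then would the Rosser/Friedman--Goldfarb--Harrington detour be forced on you. Your sketch of that detour is in the right spirit but the fixed point needs minor care: the standard construction diagonalises to get $\rho$ asserting that every proof of $\rho$ itself is preceded by a witness to $\neg\phi$, yielding $\Con(T)\implica(\neg\psi\coimplica \mathrm{Pr}_T(\ulcorner\rho\urcorner))$, and one then sets $\widetilde\psi\coloneqq\neg\rho$; as literally written (diagonalising on proofs of $\neg\widetilde\psi$) the verification of the ``$\psi$ true'' case does not close. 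Since you only need that construction as a fallback, this does not affect the correctness of your proof.
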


\begin{co}\label{co:concon}
$\Th(K_1)$ is not complete. 
\end{co}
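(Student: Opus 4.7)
The plan is to combine the two facts just cited with Corollary~\ref{co:muphiprime}, so as to construct a single $L_1$-sentence whose truth in $M_1$ depends on which $M\models\mathsf{ZFA}$ we pick. First, I would invoke Rosser's Theorem to fix a $\Pi^0_1$ arithmetical sentence $\psi$ that is independent of $\mathsf{ZFA}$; since our metatheoretic assumption together with Fact~\ref{fact:equicon} guarantees that $\mathsf{ZFA}$ is consistent, both $\mathsf{ZFA}+\psi$ and $\mathsf{ZFA}+\neg\psi$ are consistent, hence each has a model. Then I would apply Fact~\ref{fact:eqcon} to obtain an arithmetical $\widetilde\psi$ with $\mathsf{ZFA}\proves\psi\coimplica\Con(\mathsf{NBG}^-+\widetilde\psi)$.

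Next, using the fact that $\mathsf{NBG}^-$ is finitely axiomatisable in $L_{\mathsf{NBG}}$, I would let $\theta$ be the single $L_{\mathsf{NBG}}$-sentence obtained by conjoining the finitely many axioms of $\mathsf{NBG}^-$ with $\widetilde\psi$, so that $\Con(\theta)$ means exactly $\Con(\mathsf{NBG}^-+\widetilde\psi)$. Feeding this $\theta$ into Corollary~\ref{co:muphiprime} produces an $L_1$-sentence $\mu(\theta')$ such that, for every $M\models\mathsf{ZFA}$,
\[
M\models\Con(\theta)\iff M_1\models\mu(\theta').
\]

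To conclude, pick $M,N\models\mathsf{ZFA}$ with $M\models\psi$ and $N\models\neg\psi$. By the equivalence provided by Fact~\ref{fact:eqcon}, $M\models\Con(\theta)$ while $N\models\neg\Con(\theta)$. The displayed biconditional then yields $M_1\models\mu(\theta')$ and $N_1\models\neg\mu(\theta')$, exhibiting two members of $K_1$ that disagree on the $L_1$-sentence $\mu(\theta')$. Hence $\Th(K_1)$ is not complete.

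There is essentially no technical obstacle once the machinery is in place: the only thing to be careful about is why one needs the detour through $\widetilde\psi$ rather than, say, $\Con(\mathsf{NBG})$ directly. The point is that proving $\Con(\mathsf{NBG})$ (equivalently $\Con(\mathsf{ZFA})$) to be independent of $\mathsf{ZFA}$ would require the stronger metatheoretic assumption $\Con(\mathsf{ZFC}+\Con(\mathsf{ZFC}))$, whereas Rosser's Theorem together with Fact~\ref{fact:eqcon} supplies a sentence of the form $\Con(\mathsf{NBG}^-+\widetilde\psi)$ whose independence from $\mathsf{ZFA}$ is already provable from $\mathsf{ZFC}+\Con(\mathsf{ZFC})$ alone, matching the standing assumption of the paper.
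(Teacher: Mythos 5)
Your proposal is correct and follows essentially the same route as the paper's proof: fix a Rosser sentence $\psi$ independent of $\mathsf{ZFA}$, pass to $\widetilde\psi$ via Fact~\ref{fact:eqcon}, and apply Corollary~\ref{co:muphiprime} to $\theta\coloneqq\mathsf{NBG}^-+\widetilde\psi$ to obtain an $L_1$-sentence on which two D-graphs disagree. Your closing remark about why $\Con(\mathsf{NBG})$ alone would not suffice under the paper's metatheoretic assumption matches the discussion preceding the corollary.
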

\begin{proof}
Let $\psi$ be given by Rosser's Theorem, and let $\widetilde\psi$ be given by Fact~\ref{fact:eqcon} applied to $\psi$. Apply  Corollary~\ref{co:muphiprime} to $\theta\coloneqq\mathsf{NBG}^-+\widetilde\psi$.
\end{proof}

It is therefore natural to study the completions of $\Th(K_1)$, and it follows easily from $K_1$ being pseudoelementary that all of these are the theory of some actual D-graph $M_1$. We provide a proof for completeness.

\begin{pr}\label{pr:completions}
  Let $T$ be an $L$-theory, and let  $K$ be the class of its models. Let $L_1\subseteq L$, and for $M\in K$ denote $M_1\coloneqq M\restr L_1$. Let $K_1\coloneqq\set{M_1\mid M\in K}$ and $N\models \Th(K_1)$. Then there is $M\in K$ such that $M_1 \equiv N$. 
\end{pr}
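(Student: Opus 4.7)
The plan is to run the standard pseudoelementary-class argument via compactness. Let $\Sigma \coloneqq \Th_{L_1}(N)$ denote the complete $L_1$-theory of $N$. I would try to show that the $L$-theory $T \cup \Sigma$ is consistent; any model $M$ of $T \cup \Sigma$ then lies in $K$, and since $M_1 \models \Sigma$ and $\Sigma$ is a complete $L_1$-theory, this automatically gives $M_1 \equiv N$.

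To verify that $T \cup \Sigma$ is consistent I would use the Compactness Theorem: it suffices to show that for every finite $\Sigma_0 \subseteq \Sigma$ the theory $T \cup \Sigma_0$ has a model. Setting $\sigma \coloneqq \bigwedge \Sigma_0$, which is itself a single $L_1$-sentence lying in $\Sigma$ (as $\Sigma$ is deductively closed), this reduces to showing $T \cup \set{\sigma}$ is consistent. Suppose for contradiction that it is not, so $T \proves \neg \sigma$. Because $\sigma \in L_1 \subseteq L$ and reduction to $L_1$ preserves truth of $L_1$-sentences, every $M \in K$ would satisfy $M_1 \models \neg \sigma$, and hence $\neg \sigma \in \Th(K_1)$. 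Since $N \models \Th(K_1)$ by hypothesis, this yields $N \models \neg \sigma$, contradicting $\sigma \in \Th_{L_1}(N)$.

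Combining these two steps, Compactness produces $M \models T \cup \Sigma$, and then $M \in K$ with $M_1 \equiv N$ as required. There is no real obstacle here: the only content is the interplay between (i) reducts preserving $L_1$-truth in both directions and (ii) the completeness of $\Th_{L_1}(N)$, both of which are immediate from the definitions.
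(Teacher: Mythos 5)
Your proof is correct and follows essentially the same route as the paper's: both reduce consistency of $T\cup\Th_{L_1}(N)$ via compactness to a single $L_1$-sentence and derive a contradiction with $N\models\Th(K_1)$. Your write-up merely makes the compactness step more explicit.
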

\begin{proof}
We are asking whether there is any $M\models T\cup \Th(N)$, so it is enough to show that the latter theory is consistent. If not, there is an $L_1$-formula $\phi\in \Th(N)$ such that $T\proves \neg \phi$. In particular, since $\neg\phi\in L_1$, we have that $\Th(K_1)\proves \neg \phi$, and this contradicts that $N\models \Th(K_1)$.
\end{proof}

In order to  characterise the completions of $\Th(K_1)$, we will use techniques from finite model theory, namely Ehrenfeucht-Fra\"iss\'e games and $k$-equivalence. For background on these concepts, see~\cite{ebbinghausflum}.

\begin{lemma}\label{lemma:duog}
  Let $G=G_0\sqcup G_1$ be a graph with no edges between $G_0$ and $G_1$, and let $H=H_0\sqcup H_1$ be a graph with no edges between $H_0$ and $H_1$. If $(G_0, \bla a1,{m-1})\equiv_k (H_0,\bla b1,{m-1})$ and $(G_1, a_m)\equiv_k (H_1, b_m)$, then $(G,\bla a1,m)\equiv_k (H, \bla b1,m)$.
\end{lemma}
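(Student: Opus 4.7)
The natural route is the Ehrenfeucht-Fra\"iss\'e characterisation of $\equiv_k$: two pointed structures are $k$-equivalent precisely when Duplicator has a winning strategy in the $k$-round EF game between them. So the plan is to take winning strategies on the two sides and splice them into a winning strategy on the disjoint union. Note that the hypothesis implicitly requires $a_1,\dots,a_{m-1},b_1,\dots,b_{m-1}$ to lie in the $0$-components and $a_m,b_m$ in the $1$-components, which is what makes the set-up coherent.

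Concretely, let $\sigma_0$ be a winning strategy for Duplicator in the $k$-round game on $(G_0,a_1,\dots,a_{m-1})$ versus $(H_0,b_1,\dots,b_{m-1})$, and $\sigma_1$ a winning strategy on $(G_1,a_m)$ versus $(H_1,b_m)$. I would define a strategy $\sigma$ on $(G,\bla a1,m)$ versus $(H,\bla b1,m)$ as follows: in each round, when Spoiler picks a vertex $c$ of $G$ or $H$, decide whether $c\in G_0\cup H_0$ or $c\in G_1\cup H_1$, and have Duplicator reply using $\sigma_0$ or $\sigma_1$ on the same side, within the corresponding sub-game. Since the total number of rounds is $k$, at most $k$ of them are played on each side, so both $\sigma_0$ and $\sigma_1$ are called upon at most $k$ times and hence remain defined.

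After the $k$ rounds, call the chosen pairs $(c_i,d_i)$ for $i\le k$, and partition them according to which sub-game they belong to. By the winning property of $\sigma_0$, the map sending $a_j\mapsto b_j$ (for $j<m$) together with the pairs on side $0$ is a partial isomorphism $G_0\to H_0$; similarly $\sigma_1$ yields a partial isomorphism $G_1\to H_1$ extending $a_m\mapsto b_m$. These are maps between disjoint sets of vertices on disjoint sets of vertices, so their union is a well-defined injection $G\to H$. The key observation is that edge-preservation across the two sides is automatic: the hypothesis that there are no edges between $G_0$ and $G_1$ in $G$, nor between $H_0$ and $H_1$ in $H$, means that the only edges among the chosen vertices lie entirely within one side, where they are already handled correctly. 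Hence the combined map is a partial isomorphism, so $\sigma$ is winning.

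The only step that requires a moment's care is the bookkeeping of how moves are distributed between the two sub-games (Duplicator's strategy on a side depends only on the moves played on that side, which is fine because $\sigma_0$ and $\sigma_1$ are winning for any $k'\le k$ rounds of play); the rest is the standard composition lemma for disjoint unions in EF games, whose only real content is the ``no cross-edges'' assumption.
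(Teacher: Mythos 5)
Your argument is correct: it is the standard Ehrenfeucht--Fra\"iss\'e splicing proof for disjoint unions, with the two genuinely necessary observations (each component strategy is invoked at most $k$ times and remains winning for any $k'\le k$ rounds, and the absence of cross-edges makes the union of the two partial isomorphisms a partial isomorphism) both made explicit. The paper does not spell out a proof at all---it simply cites this as standard (Ebbinghaus--Flum, Proposition~2.3.10)---and what you have written is precisely the argument behind that citation, so there is nothing to reconcile.
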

\begin{proof}
This is standard, see e.g.~\cite[Proposition~2.3.10]{ebbinghausflum}.
\end{proof}

\begin{thm}\label{thm:completions}
  Let $M$ and $N$ be models of $\mathsf{ZFA}$.   The following are equivalent.
  \begin{enumerate}
  \item\label{point:m1n1} $M_1\equiv N_1$.
  \item\label{point:Phi} $M_1$ and $N_1$ satisfy the same sentences of the form $\mu(\phi)$, as $\phi$ ranges in $\Phi$.
  \item \label{point:consistency}$M$ and $N$ satisfy the same consistency statements.
  \end{enumerate}
\end{thm}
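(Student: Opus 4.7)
The plan is to prove (1)$\Rightarrow$(2), (2)$\Leftrightarrow$(3), and then the substantive (2)$\Rightarrow$(1). The first implication is immediate, as each $\mu(\phi)$ is an $L_1$-sentence. For (2)$\Leftrightarrow$(3), I would apply Lemma~\ref{lemma:muworks}, which equates $M_1\models\mu(\phi)$ with $M\models\Con(\phi)$ for $\phi\in\Phi$, together with Fact~\ref{fact:interpretation}, which translates any consistency statement into $\Con(\phi')$ for some $\phi'\in\Phi$ while preserving consistency. So agreement on all $\mu(\phi)$ with $\phi\in\Phi$ is exactly agreement on all consistency statements.

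The substantive implication (2)$\Rightarrow$(1) will use finite model-theoretic tools. I would show $M_1\equiv_k N_1$ for every $k\in\omega$. Since $M_1$ decomposes as a disjoint union of its (metatheoretic) connected components with no $D$-edges crossing, iterated application of Lemma~\ref{lemma:duog} gives a Feferman-Vaught-style reduction: $M_1\equiv_k N_1$ if and only if, for each $k$-equivalence class $\tau$ of connected $L_1$-structures, the multiplicity of $\tau$-components in $M_1$ and $N_1$ agrees up to a threshold depending on $k$. By Theorem~\ref{thm:aq3} combined with Proposition~\ref{pr:embedgraphs}, these multiplicities are always $0$ or $\aleph_0$: once $M$ contains a graph $G$ with a connected piece of $k$-type $\tau$ in the metatheory, an $\omega$-fold disjoint union of $G$ inside $M$ produces infinitely many such components in $M_1$. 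So the question reduces to mere existence of a $\tau$-component in $M_1$.

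I would capture this existence by a sentence $\mu(\phi_\tau)$ with $\phi_\tau\in\Phi$. Since $\tau$ is a finite conjunction of rank-$k$ sentences in the finite language $L_1$, realisability of $\tau$ inside $M$ is a genuine consistency statement, which via Lemma~\ref{lemma:muworks} becomes $M_1\models\mu(\phi_\tau)$. Hypothesis (2) then forces $M_1$ and $N_1$ to agree on every such $\mu(\phi_\tau)$, completing the argument. The main obstacle is that metatheoretic connectedness is not axiomatisable in $M$: a single region can split into several metatheoretic components when $M$ has nonstandard naturals, so $\phi_\tau$ must be chosen to guarantee not just a subgraph of $k$-type $\tau$ inside $M$ but one that lifts to a metatheoretic connected component of $M_1$. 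The resolution uses Proposition~\ref{pr:embedgraphs} carefully, exploiting the fact that $k$-types of the desired kind admit sufficiently small witnesses for which internal and metatheoretic connectedness coincide.
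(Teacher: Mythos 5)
Your treatment of (1)$\Rightarrow$(2) and (2)$\Leftrightarrow$(3) is fine and matches the paper's, but the core implication (2)$\Rightarrow$(1) has a genuine gap. Your reduction sends everything to the question of whether $M_1$ and $N_1$ realise the same $\equiv_k$-classes $\tau$ of \emph{metatheoretic} connected components, and you then need a sentence $\mu(\phi_\tau)$ whose truth in a D-graph is equivalent to the existence of a component of class $\tau$. But by Lemma~\ref{lemma:muworks}, $M_1\models\mu(\phi_\tau)$ is equivalent to $M\models\Con(\phi_\tau)$, which only yields a union $H$ of \emph{regions} with $H\models\phi_\tau$; since $\equiv_k$ is not determined by, and does not determine, the multiset of component types, $H\equiv_k C$ for a connected $C$ of class $\tau$ gives no single component of $H$ in class $\tau$. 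The converse direction breaks too: a component $C$ of infinite diameter need not be an $M$-set (its region may strictly contain it), so its existence need not witness $M\models\Con(\sigma_\tau)$ internally --- indeed $M$ could even believe $\sigma_\tau$ inconsistent via a nonstandard proof while $M_1$ genuinely realises it. Your proposed fix, that the relevant $k$-types ``admit sufficiently small witnesses for which internal and metatheoretic connectedness coincide,'' is exactly the unproved step: it handles classes containing a finite (or finite-diameter) connected representative, but says nothing about classes all of whose connected members have infinite diameter and are infinite, which do occur as components of D-graphs (Theorem~\ref{thm:aq3} produces them from internal graphs with nonstandard parameters).

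The paper avoids this issue entirely by never trying to match connected components. It runs an Ehrenfeucht--Fra\"iss\'e game of each finite length $n$ in which the Duplicator maintains sets $G_0^m$, $H_0^m$ that are unions of \emph{regions} and are $M$-sets resp.\ $N$-sets, with $(G_0^m,\bar a)\equiv_{n-m}(H_0^m,\bar b)$. When the Spoiler plays a point in a fresh region $G_1^m$, that region is an $M$-set (Remark~\ref{rem:compdef}), so the sentence characterising its $\equiv_{n-m+1}$-class is internally consistent in $M$; hypothesis (2) plus Lemma~\ref{lemma:muworks} then produces a union of regions of $N$ in the same $\equiv_{n-m+1}$-class, and Lemma~\ref{lemma:duog} glues it to the previously constructed part, using only that regions are unions of components (so no $D$-edges cross). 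Matching regions up to finite quantifier rank is strictly weaker than matching component types, and is precisely what the sentences $\mu(\phi)$ can certify; that is the idea your argument is missing. (The paper's closing remark that a Gaifman-style locality argument would give only a ``less specific'' result is essentially a warning about the route you chose.)
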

\begin{proof}
For statements about graphs, the equivalence of~\ref{point:Phi} and~\ref{point:consistency} follows from Lemma~\ref{lemma:muworks}. For statements in other languages, it is enough to interpret them in graphs using~\cite[Theorem~5.5.1]{hodges}.
  
For the equivalence of~\ref{point:m1n1} and~\ref{point:Phi}, we show that for every $n\in \omega$ the Ehrenfeucht-Fra\"iss\'e game between $M_1$ and $N_1$ of length  $n$ is won by the Duplicator, by describing a winning strategy. The idea behind the strategy is the following. Recall that, for every finite relational language and every $k$, there is only a finite number of $\equiv_{k}$-classes, each characterised by a single sentence (see e.g.~\cite[Corollary~2.2.9]{ebbinghausflum}).  After the Spoiler plays a point $a$, the Duplicator replicates  the $\equiv_k$-class of the region of $a$  using Lemma~\ref{lemma:muworks}.

Fix the length $n$ of the game and denote by $\bla a1,{m}\in M_1$ and $\bla b1,{m}\in N_1$ the points chosen  at the end of turn $m$.
 The Duplicator defines, by simultaneous induction on $m$,  sets $G^{m}_0\subseteq M_1$ and $H^{m}_0\subseteq N_1$, and  makes sure  that they satisfy the following conditions.
\begin{enumerate}[label=(C\arabic*)]
\item \label{point:trivial} $\bla a1,m\in G^m_0$ and $\bla b1,m \in H^m_0$.
\item \label{point:selfcontained} $G^m_0$ and $H^m_0$ are unions of regions of $M$ and $N$ respectively.
  \item \label{point:aresets} $G_0^m$ and $H_0^m$ are respectively an $M$-set and an $N$-set.
\item \label{point:stategycondition} When $G^m_0$ and $H^m_0$ are equipped with the $L_1$-structures induced by $M$ and $N$ respectively, we have $(G^m_0,\bla a1,m)\equiv_{n-m} (H^m_0, \bla b1,m)$.
\end{enumerate}
Before the game starts (`after turn $0$') we set $G^0_0=H^0_0=\emptyset$ and all conditions trivially hold. Assume inductively that they hold after turn $m-1$. We deal with the case where the Spoiler plays  $a_m\in M_1$; the case where the Spoiler plays $b_m \in N_1$ is symmetrical. 

Let $G^{m}_1$ be the  region of $a_m$ in $M$. If $G_1^m\subseteq G_0^{m-1}$ then, since by inductive hypothesis condition~\ref{point:stategycondition} held after turn $m-1$, the Duplicator can find $b_m\in H_0^{m-1}$  such that $(G_0^{m-1},\bla a0,m)\equiv_{n-m}(H_0^{m-1},\bla b0,m)$. It is then clear that all conditions hold after setting $G_0^m=G_0^{m-1}$ and $H_0^m=H_0^{m-1}$. 

Otherwise, by~\ref{point:selfcontained}, we have  $G^m_1\cap G^{m-1}_0=\emptyset$.    Let $\phi$  characterise the $\equiv_{n-m+1}$-class of $G^m_1$. Note that, if $n-m+1\ge 2$, then $\phi\in \Phi$ automatically. Otherwise, replace $\phi$ with $\phi\land \forall x\forall y\; (D(x,y)\implica D(y,x))$. By Remark~\ref{rem:compdef}, $G^m_1$ is an $M$-set, hence $M\models \Con(\phi)$. By Lemma~\ref{lemma:muworks} and assumption, there is a union $H^m_1$ of regions of $N$ which is an $N$-set and such that $G^m_1\equiv_{n-m+1} H^m_1$.  By inductive hypothesis,  $H_0^{m-1}$ is also an $N$-set by~\ref{point:aresets}. Therefore, up to writing a suitable flat system in $N$, we may  replace $H_1^m$ with an isomorphic copy which is still a union of regions and an $N$-set, but with $H_1^m\cap H_0^{m-1}=\emptyset$.

Let $b_m\in H_1^m$ be the choice given by a winning strategy for the Duplicator in the game of length $n-m+1$ between $G_1^m$ and $H_1^m$ after the Spoiler plays $a_m\in G_1^m$ as its first move. Set $G^m_0=G^{m-1}_0\cup G^m_1$ and $H^m_0=H^{m-1}_0\cup H^m_1$.  Note that $G^{m-1}_0, G^m_1, H^{m-1}_0, H^m_1$ are all unions of regions and $M$-sets or $N$-sets, hence~\ref{point:selfcontained} and~\ref{point:aresets} hold (and~\ref{point:trivial}  is clear). Moreover both unions are disjoint, so the hypotheses of  Lemma~\ref{lemma:duog} are satisfied and $(G^m_0,\bla a1,m) \equiv_{n-m} (H^m_0, \bla b1,m)$, i.e.~\ref{point:stategycondition} holds.

To show that this strategy is winning, note that the outcome of the game only depends on the induced structures on $\bla a1,n$ and $\bla b1,n$ at the end of the final turn.  These do not depend on what is outside $G_0^n$ and $H_0^n$ since they are unions of regions, hence unions of connected components. As~\ref{point:stategycondition} holds at the end of turn $n$, the structures induced on $\bla a1,n$ and $\bla b1,n$ are isomorphic.
\end{proof}
\begin{co}\label{co:completions}
  Let $N\models \Th(K_1)$. Then $\Th(N)$ is axiomatised by
  \[
    \Th(K_1)\cup \set{\mu(\phi)\mid \phi\in \Phi, N\models \mu(\phi)}\cup \set{\neg \mu(\phi)\mid \phi\in \Phi, N\models\neg\mu(\phi)}
  \]
\end{co}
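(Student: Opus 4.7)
The plan is to reduce this corollary to the combination of Theorem~\ref{thm:completions} and Proposition~\ref{pr:completions}. Denote the proposed axiomatisation by $T$. One inclusion is trivial: $N$ satisfies every sentence in $T$ by definition of the last two pieces, and the first piece holds because $N\models \Th(K_1)$. So $T\subseteq \Th(N)$, and it remains to prove that any $N'\models T$ is elementarily equivalent to $N$.

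First I would invoke Proposition~\ref{pr:completions} twice. Since $N\models \Th(K_1)$, there exists $M\models \mathsf{ZFA}$ with $M_1\equiv N$; and since $N'\models T$ in particular implies $N'\models \Th(K_1)$, there exists $M'\models \mathsf{ZFA}$ with $M'_1\equiv N'$. The point of moving back into the class $K_1$ is that it lets us apply Theorem~\ref{thm:completions}, whose criterion for elementary equivalence is phrased in terms of the sentences $\mu(\phi)$ with $\phi\in\Phi$.

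Now I would compare $M_1$ and $M'_1$ on the sentences $\mu(\phi)$, $\phi\in \Phi$. By construction of $T$, $N$ and $N'$ agree on each such $\mu(\phi)$: if $N\models \mu(\phi)$ then $\mu(\phi)\in T$, so $N'\models \mu(\phi)$; conversely, if $N\models \neg\mu(\phi)$ then $\neg\mu(\phi)\in T$, so $N'\models\neg\mu(\phi)$. Transferring this agreement through $M_1\equiv N$ and $M'_1\equiv N'$, we obtain that $M_1$ and $M'_1$ satisfy the same sentences of the form $\mu(\phi)$ with $\phi\in\Phi$. Theorem~\ref{thm:completions}, in the direction \ref{point:Phi}$\Rightarrow$\ref{point:m1n1}, then gives $M_1\equiv M'_1$, and composing the three elementary equivalences yields $N\equiv M_1\equiv M'_1\equiv N'$, as required.

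There is no real obstacle here beyond bookkeeping: the corollary is essentially the statement that the sentences $\mu(\phi)$ ($\phi\in\Phi$) form a complete system of invariants for $\Th(K_1)$-completions, modulo the fact that every such completion is realised by some D-graph of a model of $\mathsf{ZFA}$ (Proposition~\ref{pr:completions}). All of the model-theoretic content was already absorbed into Theorem~\ref{thm:completions} via the Ehrenfeucht-Fraïssé argument.
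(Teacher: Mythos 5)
Your proposal is correct and follows the paper's own argument exactly: apply Proposition~\ref{pr:completions} to replace $N$ and $N'$ by D-graphs $M_1$ and $M_1'$, note that agreement on the $\mu(\phi)$ sentences transfers through the elementary equivalences, and conclude via the implication \ref{point:Phi}$\Rightarrow$\ref{point:m1n1} of Theorem~\ref{thm:completions}. The paper merely states this more tersely; your additional bookkeeping (checking $T\subseteq\Th(N)$ and the transfer of $\mu(\phi)$-agreement) is all sound.
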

\begin{proof}
Let $N'$ satisfy the axiomatisation above. Since $N$ and $N'$ are  models of $\Th(K_1)$ we may, by Proposition~\ref{pr:completions}, replace  them with D-graphs $M_1\equiv N$ and $M_1'\equiv N'$ of models of $\mathsf{ZFA}$. By Theorem~\ref{thm:completions} $M_1\equiv M_1'$.
\end{proof}
By the previous corollary, combined with Lemma~\ref{lemma:muworks},  theories  of double-membership graphs correspond bijectively to consistent  (with $\mathsf{ZFA}$, equivalently with $\mathsf{ZFC}$) collections of consistency statements.

The reader familiar with finite model theory may have noticed similarities between the proof of  Theorem~\ref{thm:completions} and certain proofs of the theorems of Hanf and Gaifman (see~\cite[Theorems~2.4.1 and~2.5.1]{ebbinghausflum}). In fact one could deduce a statement similar to Theorem~\ref{thm:completions} directly from  Gaifman's Theorem. This would characterise the completions of $\Th(K_1)$ in terms of \emph{local formulas}, of which the $\mu(\phi)$ form a subclass, yielding a   less specific result than  Corollary~\ref{co:completions}. Moreover, we believe that the correspondence with collections of consistency statements provides a conceptually clearer picture.

Similar ideas can be used to study~\cite[Question~5]{adamdaycameron}, which asks whether  a countable structure elementarily equivalent to the SD-graph $M_0$ of some $M\models \mathsf{ZFA}$ must itself be the SD-graph of some model of $\mathsf{ZFA}$. We provide a negative solution in Corollary~\ref{co:q5}. Again, Gaifman's Theorem could be used directly to deduce its second part.

\begin{thm}\label{thm:noinfdiam}
  Let $M\models \mathsf{ZFA}$. There is a countable $N\equiv M_0$ such that $N\restr L_1$ has no connected component of infinite diameter.
\end{thm}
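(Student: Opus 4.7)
The plan is to apply Gaifman's locality theorem. Observe that in the language $L_0=\{S,D\}$ the Gaifman graph of an $L_0$-structure coincides with its $S$-graph, since $D\subseteq S$. Gaifman's theorem therefore reduces $N\equiv M_0$ to the condition that, for every $r\in\omega$, every $r$-local $L_0$-formula $\psi^{(r)}(x)$, and every $k\in\omega$, the basic local sentence
\[
\exists x_1\ldots x_k\,\Bigl(\bigwedge_{i<j}d_S(x_i,x_j)>2r\;\wedge\;\bigwedge_i\psi^{(r)}(x_i)\Bigr)
\]
holds in $N$ iff it holds in $M_0$, where $d_S$ denotes $S$-distance. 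Applying L\"owenheim-Skolem to $M_0$, I may assume $M_0$ is countable, so that each $S$-ball of $M_0$ is countable.

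I would then take $N$ to be a countable $L_0$-disjoint union $\bigsqcup_{i\in\omega}I_i$ of \emph{islands}, where each $I_i$ is an $S$-ball $B^{M_0}_{r_i}(a_i)$ around some element $a_i\in M_0$, with no $S$-edges (and hence no $D$-edges, since $D\subseteq S$) placed between distinct islands. Each island has $S$-diameter at most $2r_i$, so every $D$-connected component of $N$, being contained in a single island, has $D$-diameter at most $2r_i$. This secures the required finite-diameter property of $N\restr L_1$ immediately.

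It remains to choose the islands so that $N\equiv M_0$. For each pair $(r,\psi^{(r)})$, let $\gamma_{M_0}(r,\psi^{(r)})$ denote the supremum of those $k$ for which $M_0$ has $k$ pairwise-$S$-distance-$>2r$ realizers of $\psi^{(r)}$; the aim is to include in $N$ exactly $\gamma_{M_0}(r,\psi^{(r)})$ islands, each centred on a distinct such realizer. Since distinct islands are at infinite $S$-distance from one another, any selection of one realizer per island is automatically pairwise $2r$-far, so the counts add up cleanly across islands. The main technical obstacle is that taking $B^{M_0}_r(a)$ as an isolated island faithfully preserves the $r$-local type of the centre $a$ but truncates the $r$-balls of boundary points of the island, which may therefore realize $r$-local formulas that $M_0$ does not realize as often (or at all), spoiling the matching of $\gamma_N$ with $\gamma_{M_0}$. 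I would counter this by taking each island to have radius strictly larger than the constraint radius it is meant to witness (for instance, $3r$ in place of $r$), so that every point within distance $r$ of the centre retains its exact $M_0$-$r$-ball, and by running a back-and-forth / priority construction over the enumeration of basic local sentences that tracks and compensates for the residual boundary effects on the counts. Gaifman's theorem then certifies $N\equiv M_0$.
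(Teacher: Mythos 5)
Your construction measures locality in the wrong metric, and this is fatal at two separate points. First, since $D\subseteq S$, every $D$-path is an $S$-path, so $d_S\le d_D$; a set of $S$-diameter $2r_i$ can therefore contain a $D$-connected component of arbitrarily large, even infinite, $D$-diameter (e.g.\ an infinite $D$-path all of whose vertices are $S$-adjacent to its first vertex). So your key inference --- ``each island has $S$-diameter at most $2r_i$, so every $D$-connected component of $N$ \ldots has $D$-diameter at most $2r_i$'' --- does not follow, and the finite-diameter property of $N\restr L_1$ is exactly what is \emph{not} secured. Second, and worse, for $M\models\mathsf{ZFA}$ the $S$-reduct of $M_0$ is the Random Loopy Graph, which has diameter $2$: every $S$-ball of radius $\ge 2$ is all of $M_0$, so your islands cannot be proper substructures, the scattering conditions $d_S(x_i,x_j)>2r$ are unsatisfiable for $r\ge 1$, and Gaifman locality with respect to the $L_0$ Gaifman graph (which, as you correctly note, is the $S$-graph) gives no information. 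Independently of all this, any genuine $L_0$-disjoint union of two or more islands with no $S$-edges between them falsifies the sentence $\forall x\,\forall y\,\exists z\,(S(x,z)\land S(z,y))$, which holds in $M_0$; so such an $N$ is never elementarily equivalent to $M_0$.

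The paper's proof is built precisely around avoiding this trap: the Hanf/Ehrenfeucht--Fra\"iss\'e argument runs with balls taken in the $D$-metric (the Gaifman graph of the $L_1$-reduct), while the partial isomorphisms are required to preserve the full $L_0$-structure, and $N$ is taken to be $M_0$ with the infinite-$D$-diameter components deleted rather than an abstract disjoint union. The price of using $D$-balls is that distinct $D$-balls are \emph{not} independent in $L_0$ --- there are $S$-edges running between them --- so one cannot simply juxtapose copies; instead, when the Spoiler plays a point in a new region, $\mathsf{AFA}$ is invoked to solve a flat system whose solution realises a fresh isomorphic copy of the relevant $D$-ball as a finite-diameter $D$-component of $N$ carrying the prescribed $S$-edges to the configuration built so far. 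Some mechanism of this kind (working in the $D$-metric and then repairing the cross-ball $S$-edges) is unavoidable, and it is entirely absent from your proposal.
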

Before the proof, we show how this solves~\cite[Question~5]{adamdaycameron}. 

\begin{co}\label{co:q5}
  For every $M\models \mathsf{ZFA}$ there are a countable  $N\equiv M_0$ which is not the SD-graph of any model of $\mathsf{ZFA}$ and a countable  $N'\equiv M_1$ which is not the D-graph of any model of $\mathsf{ZFA}$.   
\end{co}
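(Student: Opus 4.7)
The plan is to deduce the corollary directly from Theorem~\ref{thm:noinfdiam}, exploiting the fact that the D-graph of every model of $\mathsf{ZFA}$ must contain a connected component of infinite diameter. Once this obstruction is in place, any countable $N\equiv M_0$ whose $L_1$-reduct lacks such a component will necessarily fail to be the SD-graph of any model of $\mathsf{ZFA}$, and its $L_1$-reduct will likewise fail to be a D-graph.

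First I would record the obstruction. Given any $K\models \mathsf{ZFA}$, I work inside $K$ to define the graph $P$ with domain $\omega$ and edges joining $i$ and $i+1$; this is a graph in the sense of $K$ whose unique connected component (in the sense of the metatheory) has infinite diameter. Applying Proposition~\ref{pr:embedgraphs} I obtain $H\subseteq K_1$ isomorphic to $P$ that is a union of regions of $K$, hence a union of connected components of $K_1$. Since $P$ is connected in the metatheory, $H$ is a single connected component of $K_1$, of infinite diameter. As $K_0\restr L_1 = K_1$ by Definition~\ref{defin:symmetrisation}, both the D-graph and the $L_1$-reduct of the SD-graph of any model of $\mathsf{ZFA}$ contain a connected component of infinite diameter.

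To conclude, I would apply Theorem~\ref{thm:noinfdiam} to $M$ to obtain a countable $N\equiv M_0$ such that $N\restr L_1$ has no connected component of infinite diameter. If $N$ were the SD-graph $K_0$ of some $K\models \mathsf{ZFA}$, then $N\restr L_1$ would equal $K_1$, contradicting the previous paragraph; this yields the first part. For the second part, I would set $N'\coloneqq N\restr L_1$: since reducts preserve elementary equivalence, $N'\equiv M_0\restr L_1 = M_1$, and $N'$ is a countable structure without connected components of infinite diameter, so it cannot be the D-graph of any model of $\mathsf{ZFA}$.

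The argument is short because the heavy lifting lies entirely in Theorem~\ref{thm:noinfdiam}; the only point worth flagging is the choice of a concrete graph witnessing the obstruction, for which the infinite path is the natural pick.
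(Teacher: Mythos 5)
Your argument is correct and is essentially the paper's own proof: apply Theorem~\ref{thm:noinfdiam}, take $N'\coloneqq N\restr L_1$, and use Proposition~\ref{pr:embedgraphs} to see that every D-graph of a model of $\mathsf{ZFA}$ has a connected component of infinite diameter (the paper leaves this last observation implicit, and your infinite path is the natural witness). One cosmetic remark: if $K$ has non-standard natural numbers then your $P$ is not connected in the metatheory (it splits into the standard ray plus $\mathbb Z$-chains), but every one of its metatheoretic components still has infinite diameter, so the obstruction goes through unchanged.
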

\begin{proof}
  Let $N$ be given by Theorem~\ref{thm:noinfdiam} and $N'\coloneqq N\restr L_1$. Now observe that, as follows easily from Proposition~\ref{pr:embedgraphs}, any reduct to $L_1$ of a model of $\mathsf{ZFA}$ has a connected component of infinite diameter. 
\end{proof}
Note that this proves slightly more: a negative solution to the question would only have required to find a single pair $(M_0, N)$ satisfying the conclusion of the corollary.

\begin{proof}[Proof of Theorem~\ref{thm:noinfdiam}]
  Up to passing to a countable elementary substructure, we may assume that $M$ itself is countable. Let $N$ be obtained from $M_0$ by removing all points whose connected component in $M_1$ has infinite diameter. We show that  $M_0\equiv N$ by exhibiting, for every $n$, a sequence $(I_j)_{j\le n}$ of non-empty sets of partial isomorphisms between $M_0$ and $N$ with the back-and-forth property (see~\cite[Definition~2.3.1 and Corollary~2.3.4]{ebbinghausflum}).  The idea  is to adapt the proof of~\cite[Lemma~2.2.7]{otto} (essentially Hanf's Theorem) by considering the Gaifman balls with respect to $L_1$, while requiring the partial isomorphisms  to preserve the richer language $L_0$.
  
 On an $L_0$-structure $A$, consider the distance $d\from A\to \omega\cup \set \infty$ given by the graph distance in the reduct $A\restr L_1$ (where $d(a,b)=\infty$ iff  $a,b$ lie in distinct connected components). If $\bla a1,k\in A$ and $r\in \omega$, denote by $\dom(B(r,\bla a1,k))$ the union of the balls of radius $r$ (with respect to $d$) centred on $\bla a1,k$. Equip $\dom(B(r,\bla a1,k))$ with the $L_0$-structure induced by $A$, then expand  to an  $L_0\cup\set{\bla c1,k}$-structure $B(r, \bla a1,k)$ by interpreting each constant symbol $c_i$ with the corresponding $a_i$. We stress that, even though  $B(r, \bla a1,k)$ carries an $L_0\cup\set{\bla c1,k}$-structure, and  we consider  isomorphisms with respect to this structure, the balls giving its domain are defined with respect to the distance induced by $L_1$ alone.

Set $r_j\coloneqq (3^{j}-1)/2$ and fix $n$.  Define $I_n\coloneqq\set{\emptyset}$, where $\emptyset$ is thought of as the empty partial map $M_0\to N$.  For $j<n$, let $I_j$  be the following  set of partial maps    $M_0\to N$:
  \[
I_j\coloneqq\set{\bla a1,k\mapsto \bla b1,k\mid k\le n-j, B(r_j, \bla a1,k)\cong B(r_j, \bla b1,k)}
  \]
We have to show that for every map $\bla a1,k\mapsto \bla b1,k$ in $I_{j+1}$ and every $a\in M_0$ [resp.~every $b\in N$] there is $b\in N$ [resp.~$a\in M_0$] such that $\bla a1,k, a\mapsto \bla b1,k, b$ is in  $I_j$.

Denote by $\iota$ an isomorphism $B(r_{j+1}, \bla a1,k)\to B(r_{j+1}, \bla b1,k)$ and let $a\in M_0$. If $a$ is chosen in $B(2\cdot r_j+1, \bla a1,k)$, then by the triangle inequality and the fact that $2\cdot r_j+1+r_j=r_{j+1}$ we have  $B(r_j, a)\subseteq B(r_{j+1}, \bla a1,k)$, and we can just set $b\coloneqq \iota(a)$.

Otherwise, again by the triangle inequality,  $B(r_j, a)$ and $B(r_{j}, \bla a1,k)$ are disjoint and there is no $D$-edge between them. Note, moreover, that they are $M$-sets. This  allows us to write a suitable flat system, which will yield the desired $b$.

Working inside $M$, for every $d\in B(r_j, a)$ choose a well-founded set $h_d$ such that for all $d,d_0,d_1\in B(r_j,a)$ we have 
\begin{enumerate}[label=(H\arabic*)]
\item \label{point:noedge} $h_{d_0}\notin h_{d_1}$,
\item \label{point:distinct}if $d_0\ne d_1$ then $h_{d_0}\ne h_{d_1}$,
  \item \label{point:nocup}$h_d\notin  B(r_{j}, \bla b1,k)$, 
\item \label{point:cup}$h_d\notin \bigcup B(r_{j}, \bla b1,k)$, and
\item \label{point:cupcup}$h_d\notin \bigcup \bigcup B(r_{j}, \bla b1,k)$.
\end{enumerate}
Let $\set{x_d\mid d\in B(r_j,a)}$ be a set of indeterminates. Define
\begin{align*}
  P_d&\coloneqq\set{x_{e}\mid e\in B(r_j, a), M\models e\in d} \\
  Q_d&\coloneqq\set{\iota(f)\mid f\in B(r_j, \bla a1,k), M\models S(d,f)}
\end{align*}
and consider the flat system
\begin{equation}
  \set{x_d=\set{h_d}\cup P_d \cup Q_d\mid d\in B(r_j, a)}\label{eq:lastflatsystem}\tag{$*$}
\end{equation}
Intuitively, the terms $P_d$ ensure that the image of a solution is an isomorphic copy of $B(r_j, a)$, while the terms $Q_d$ create the appropriate $S$-edges between the image and $B(r_j,\bla b1,k)$ (note that we do not need any $D$-edges because there are none between $B(r_j,a)$ and $B(r_j, \bla a1,k)$). The $\set{h_d}$ are needed for bookkeeping reasons, in order to avoid pathologies. We now spell out the details; keep in mind that each $P_d$ consists of indeterminates, and each $Q_d$ is a subset of $B(r_j, \bla b1,k)$.

Let $s$ be a solution of~\eqref{eq:lastflatsystem}, guaranteed to exist by $\mathsf{AFA}$.  By~\ref{point:noedge} and the fact that each member of  $\operatorname{Im}(s)$ contains some $h_d$, we have $\set{h_d\mid d\in B(r_j,a)}\cap \operatorname{Im}(s)=\emptyset$.  Using this together with~\ref{point:distinct} and~\ref{point:nocup} we have $h_d\in s(x_e)\iff d=e$, hence $s$ is injective.

Let $s'\coloneqq d\mapsto s(x_d)$  and $b\coloneqq s'(a)$. By~\ref{point:cup} we have that   $\operatorname{Im}(s)$ does not intersect $B(r_j, \bla b1,k)$, and we already showed that it does not meet $\set{h_d\mid d\in B(r_j,a)}$.   By looking at~\eqref{eq:lastflatsystem} and at the  definition of the terms $P_d$, we have that  $\operatorname{Im}(s)=B(r_j, b)$ and that $s'$ is an isomorphism $B(r_j,a)\to B(r_j, b)$.

Note that the only $D$-edges involving points of $\operatorname{Im}(s)$ can come from the terms $P_d$: the $h_d$ are well-founded, and   there are no $g\in \operatorname{Im}(s)$ and $\ell\in B(r_{j}, \bla b1,k)$ such that $g\in \ell$, since $g$ contains some $h_d$ but this cannot be the case for any element of $\ell$  because of~\ref{point:cupcup}. Hence $\operatorname{Im}(s)$ is a connected component of $M_1$ and it has  diameter not exceeding $2\cdot r_j$, so is included in $N$. 

Set $\iota'\coloneqq s'\cup(\iota \restr B(r_j,\bla a1,k))$. This map is injective because it is the union of two injective maps whose images $B(r_j, b)$  and $B(r_j, \bla b1,k)$ are, as shown above, disjoint. Moreover, there are no $D$-edges between $B(r_j, b)$ and $B(r_j, \bla b1,k)$,  since the former is a connected component of $M_1$. By inspecting the terms $Q_d$, we conclude that $\iota'$ is an isomorphism  $B(r_j, \bla a1,k,a)\to B(r_j, \bla b1,k,b)$, and this settles the `forth' case.

The proof of the `back' case, where we are given $b\in N$ and need to find $a\in M_0$, is analogous (and shorter, as we do not need to ensure that the new points are in $N$): we can  consider statements such as $e\in d$ when $e,d\in N$ since the domain of the $L_0$-structure $N$ is a subset of $M$.
\end{proof}
\begin{paragraph}{Problems}
We leave the reader with some open problems.
  \begin{enumerate}
  \item   Axiomatise the theory of D-graphs of models of $\mathsf{ZFA}$.
  \item   Axiomatise the theory of SD-graphs of models of $\mathsf{ZFA}$.
  \item Characterise the completions of the theory of SD-graphs of models of $\mathsf{ZFA}$.
  \end{enumerate}
\end{paragraph}

\begin{paragraph}{Acknowledgements}
We are grateful to Michael Rathjen for pointing out to us Fact~\ref{fact:eqcon}, and to Dugald Macpherson and Vincenzo Mantova for their guidance and feedback. The first author is supported by a Leeds Doctoral Scholarship. The second and third authors are supported by Leeds Anniversary Research Scholarships.
\end{paragraph}

\end{document}